\newtheorem{theorem}{Theorem}
\newtheorem{conjecture}{Conjecture}
\newtheorem{lemm}{Lemma}
\newtheorem{prop}{Proposition}
\newtheorem*{problem1}{Fourier Eigenvalue Linear Programming Problem}
\newcommand{\Co}{\mathbb{C}}
\newcommand{\R}{\mathbb{R}}
\newcommand{\N}{\mathbb{N}}
\newcommand{\Z}{\mathbb{Z}}
\newcommand{\A}{\mathcal{A}}
\newcommand{\mmd}{\mathrm{d}}
\newcommand{\Sch}{\mathcal{S}}
\renewcommand{\d}{\,{\rm d}} 
\numberwithin{equation}{section}
\title[regularity and mass concentration]{On regularity and mass concentration phenomena for the sign uncertainty principle}
\author[F. Gon\c calves]{Felipe Gon\c calves}
\author[D. Oliveira e Silva]{Diogo Oliveira e Silva}
\author[J. P. G. Ramos]{Jo\~ao P. G. Ramos}
\address{
	Hausdorff Center for Mathematics\\
        53115 Bonn, Germany}
\email{goncalve@math.uni-bonn.de}
\address{
        School of Mathematics\\
	University of Birmingham\\
	B15 2TT, England, UK}
\email{d.oliveiraesilva@bham.ac.uk}
\address{ 
Instituto Nacional de Matem\'atica Pura e Aplicada \\
22460-320 Rio de Janeiro, Brazil}
\email{jpgramos@impa.br}
\begin{document} 

\subjclass[2010]{42A85, 42B10, 46A11}
\keywords{Sign uncertainty principle, Fourier transform, Schwartz class, bandlimited function, mass concentration.}
\begin{abstract} 
The sign uncertainty principle of Bourgain, Clozel \& Kahane asserts that if a function $f:\mathbb{R}^d\to \mathbb{R}$ and its Fourier transform $\widehat{f}$ are nonpositive at the origin and not identically zero, then they cannot both be nonnegative outside an arbitrarily small neighborhood of the origin.
In this article, we establish some equivalent formulations of the sign uncertainty principle, and in particular prove that minimizing sequences exist within the Schwartz class when $d=1$.
We further address a complementary sign uncertainty principle, and show that corresponding near-minimizers concentrate a universal proportion of their positive mass near the origin in all dimensions.
\end{abstract}

\maketitle

\section{Introduction} 

Motivated by a problem in the theory of zeta functions over algebraic number fields, Bourgain, Clozel \& Kahane \cite{BCK10} investigated the class of functions $\mathcal A_+(d)$, defined as follows.
Given $d\geq 1$, a function $f:\R^d\to\R$ is said to be {\it eventually nonnegative} if $f(x)\geq 0$ for all sufficiently large $|x|$.
Normalize the  Fourier transform,
\begin{equation}\label{eq:FTnorm}
\widehat{f}(\xi)=\int_{\R^d} f(x)e^{-2\pi i \langle x,\xi\rangle}\, \d x,
\end{equation}
where $\langle\cdot,\cdot\rangle$ represents the usual inner product in $\R^d$.
Let $\mathcal A_+(d)$ denote the set of functions $f:\R^d\to\R$ which satisfy the following conditions: 
\begin{itemize}
\item $f\in L^1(\R^d)$, $\widehat{f}\in L^1(\R^d)$, and $\widehat{f}$ is real-valued (i.e.\@ $f$ is even);
\item $f$ is eventually nonnegative while $\widehat{f}(0)\leq 0$;
\item $\widehat f$ is eventually nonnegative while ${f}(0)\leq 0$.
 \end{itemize}
 Note that any function $f\in\mathcal A_+(d)$ is uniformly continuous.
Consider the quantity
\[r(f):=\inf\{r> 0: f(x)\geq 0 \text{ if } |x|\geq r\},\] 
which corresponds to the radius of the last sign change of $f$.
The product $r(f)r(\widehat f)$ is unchanged if we replace $f$ with $x\mapsto f(\lambda x)$ for some $\lambda>0$, and thus becomes a natural object to consider. 
One of the initial observations in \cite{BCK10} is that the quantity
\begin{equation}\label{eq:+1-up}
 \mathbb{A}_+(d) := \inf_{f \in \mathcal{A}_+(d)\setminus\{\bf 0\}} \sqrt{r(f) r(\widehat{f})}
\end{equation}
is uniformly bounded from below away from zero. 
In fact,  the following two-sided inequality is established in \cite[\S 3]{BCK10}: 
\begin{equation}\label{eq:lim-inf-sup-plus} 
 \frac{1}{\sqrt{2\pi e}}
\leq  \liminf_{d \to \infty} \frac{\mathbb{A}_+(d)}{\sqrt{d}}
\leq \limsup_{d \to \infty} \frac{ \mathbb{A}_+(d)}{\sqrt{d}} 
\leq \frac{1}{\sqrt{2 \pi}}. 
\end{equation}
In particular, the radii $r(f),r(\widehat f)$ of the last sign change of $f,\widehat f$, respectively, cannot both be made arbitrarily small, unless $f\in \mathcal A_+(d)$ is identically zero.
Consequently, the aforementioned results can be regarded as manifestations of  a {\it sign uncertainty principle}.

The sign uncertainty principle of Bourgain, Clozel \& Kahane inspired a number of subsequent works \cite{CG19, GOeSR19, GOeSS17}; see also \cite{CMS19, GIT19}.
 Gon\c calves,  Oliveira e Silva \& Steinerberger \cite{GOeSS17} proved that radial minimizers for \eqref{eq:+1-up} exist. 
 More precisely, in each dimension $d\geq 1$, they showed that there exists a radial function $f\in\mathcal A_+(d)$, satisfying $\widehat f=f, f(0)=0$, and $r(f)=\mathbb{A}_+(d)$,
and that such a minimizer must necessarily vanish at infinitely many radii greater than $\mathbb{A}_+(d)$. 
The precise shape of minimizers remained a mystery in all dimensions $d\geq 1$, until Cohn \& Gon\c calves \cite{CG19} exhibited an explicit minimizer in twelve dimensions. 
In particular, they relied on the following ingredients in order to show that $\mathbb{A}_+(12)= \sqrt{2}$: 
\begin{itemize}
 \item A Poisson-type summation formula for radial Schwartz functions $f:\R^{12}\to\Co$ based on the modular form $E_6$;
 \item An explicit construction via a remarkable integral transform discovered by Viazosvska \cite{Vi17} which turns modular forms into radial eigenfunctions of the Fourier transform. 
\end{itemize}
The first ingredient leads to the lower bound $\mathbb{A}_+(12)\ge \sqrt{2}$.
The second ingredient produces the explicit minimizer, and in particular leads to the upper bound $\mathbb{A}_+(12)\leq \sqrt{2}$. 
Moreover, the minimizer is shown to belong to the Schwartz class $\mathcal S(\R^{12})$, and
to be a $+1$ eigenfunction of the Fourier transform; see \cite[Fig.\@ 1]{CG19} for the corresponding plots. 
It then becomes natural to consider a complementary problem, associated to $-1$ eigenfunctions of the Fourier 
transform, which we now describe. 

Let $\mathcal{A}_-(d)$ denote the set of integrable functions $f: \R^d \to \R$, with integrable, real-valued Fourier transform $\widehat{f}$, such that $\widehat{f}(0) \le 0$ while $f$ is eventually nonnegative, and $f(0) \ge 0$ while $-\widehat{f}$ is eventually nonnegative.
Define the quantity 
\begin{equation}\label{eq:minA-}
 \mathbb{A}_-(d) := \inf_{f \in \mathcal{A}_-(d)\setminus\{{\bf 0}\}} \sqrt{r(f) r(\widehat{f})}.
 \end{equation}
Then \cite[Theorem 1.4]{CG19} guarantees that the chain of inequalities \eqref{eq:lim-inf-sup-plus} still holds if $\mathbb{A}_+(d)$ is replaced by $\mathbb{A}_-(d)$.
It further ensures the existence of radial minimizers for \eqref{eq:minA-} which are  $-1$ eigenfunctions of the Fourier transform, and necessarily have infinitely many zeros after the last sign change. 
Logan \cite{L2} has solved the optimization problem \eqref{eq:minA-} in the one-dimensional case $d=1$.
 Problem \eqref{eq:minA-} turns out to be closely related to the sphere packing problem and, in light of the recent breakthroughs \cite{CKMRV17, Vi17}, it has also been solved in dimensions $d \in \{8,24\}$; see \cite[Prop.\@ 7.1]{CE03} and \cite[\S 1]{CG19}. 
In particular, $\mathbb{A}_-(1)=1$, $\mathbb{A}_-(8)=\sqrt{2}$, and $\mathbb{A}_-(24)=2$.
Moreover, if $d \in \{8,24\}$, then minimizers for \eqref{eq:minA-} belong to the Schwartz class, and modulo symmetries are unique within this class.
However, no such refined regularity properties can be asserted {\it a priori} in any other dimension.

\subsection{Main results}
In this article, we investigate regularity properties and mass concentration phenomena exhibited by minimizing sequences of \eqref{eq:+1-up} and \eqref{eq:minA-}.

We use the letter $s$ to denote a sign from $\{+,-\}$, and shall sometimes identify the signs $\{+,-\}$ with the integers $\{+1,-1\}$. 
A function $f:\R^d\to\R$ is said to be {\it bandlimited} if the support of its distributional Fourier transform $\widehat f$ is compact, denoted $\textup{supp}(\widehat f) \Subset \R^d$.  
For $s\in\{+,-\}$, define the quantities
\begin{align*}
\mathbb{A}_s^{\mathcal B}(d) := \inf_{f\in\mathcal{A}_s(d)\setminus\{{\bf 0}\}\atop \textup{supp}(\widehat f) \Subset \R^d} \sqrt{r(f) r(\widehat{f})};\;\;\;
\mathbb{A}_s^{\mathcal S}(d) := \inf_{f\in \mathcal{A}_s(d)\cap \mathcal{S}(\R^d)\setminus\{{\bf 0}\} } \sqrt{r(f) r(\widehat{f})},
\end{align*}
where the infima are taken over nonzero functions in $\mathcal A_s(d)$ which are bandlimited and belong to the Schwartz space, $\mathcal S(\R^d)$, respectively. It is then natural to wonder about the relationship between the sharp constants $\mathbb{A}_s(d)$, $\mathbb{A}_s^{\mathcal B}(d)$, and $\mathbb{A}_s^{\mathcal S}(d)$.

The identities $\mathbb{A}_-(8)=\mathbb{A}_-^{\mathcal{S}}(8)$, $\mathbb{A}_-(24)=\mathbb{A}_-^{\mathcal{S}}(24)$, and $\mathbb{A}_+(12)=\mathbb{A}_+^{\mathcal{S}}(12)$ are known, simply because the corresponding minimizers in $\mathcal S(\R^d)$ have been explicitly constructed. If $(s,d)=(-,1)$, then uniqueness of minimizers (modulo symmetries) in $\mathcal S(\R)$ fails, 
but from knowledge of the corresponding minimizers one can likewise infer that $\mathbb{A}_-(1)=\mathbb{A}_-^{\mathcal{S}}(1)$. 
According to \cite[Th\'eor\`eme 3.2]{BCK10}, the two-sided inequality
\begin{equation}\label{eq:equivalence-Schwartz}
\mathbb{A}_s(d) \le \mathbb{A}_s^{\mathcal{S}}(d) \le 2\mathbb{A}_s(d)
\end{equation}
holds when $s=+1$, and the argument presented there can be easily adapted to the case $s=-1$.
As remarked in \cite{BCK10}, it is not at all clear that the first inequality in \eqref{eq:equivalence-Schwartz} should be an equality.\footnote{\cite[p.~1218]{BCK10}: ``Il n'est point \'evident que la borne $A$ d\'efinie par $A=\inf A(f)$, quand on impose de surcro\^it \`a $f$ d'appartenir \`a $\mathcal S$, co\"incide avec celle d\'efinie pour $f$ parcourant $L^1$.''} 
Our first main result settles this question for $(s,d)=(+,1)$, where all three aforementioned sign uncertainty principles are seen to be equivalent. 

\begin{theorem}\label{thm:plus-equiv} 
 $ \mathbb{A}_+(1)
 =\mathbb{A}_+^{\mathcal B}(1)
 =\mathbb{A}_+^{\mathcal{S}}(1).$
\end{theorem}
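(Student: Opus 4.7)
The trivial inclusions $\mathcal A_+(1)\cap\mathcal S(\R)\subseteq\mathcal A_+(1)$ and $\{f\in\mathcal A_+(1):\mathrm{supp}(\widehat f)\Subset\R\}\subseteq\mathcal A_+(1)$ yield $\mathbb{A}_+(1)\le\mathbb{A}_+^{\mathcal B}(1)$ and $\mathbb{A}_+(1)\le\mathbb{A}_+^{\mathcal S}(1)$. My plan to reverse these is a two-step regularization: given a near-minimizer $f\in\mathcal A_+(1)$, first produce a bandlimited $F\in\mathcal A_+(1)$ with $r(F)r(\widehat F)$ close to $r(f)r(\widehat f)$, then produce a Schwartz $G\in\mathcal A_+(1)$ with $r(G)r(\widehat G)$ close to $r(F)r(\widehat F)$. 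A small additive perturbation by a fixed auxiliary $\varphi\in\mathcal A_+(1)\cap\mathcal S(\R)$ enjoying strict sign inequalities allows me to assume throughout that $f(0)<0$ and $\widehat f(0)<0$, and that $f$ has positive mass in every neighborhood of every point of $(r(f),\infty)$; these nondegeneracies are absorbed in the final limit.

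\emph{Bandlimited step.} I take $F=f*h_\lambda$ with $h_\lambda$ a high-order Fej\'er kernel, $h_\lambda(x)=c_k\lambda\bigl(\tfrac{\sin(\pi\lambda x/k)}{\pi\lambda x/k}\bigr)^{2k}$ normalized so that $\widehat{h_\lambda}(0)=1$; then $\widehat{h_\lambda}\ge 0$ is compactly supported in $[-\lambda,\lambda]$. Consequently $\widehat F=\widehat f\cdot\widehat{h_\lambda}$ is compactly supported (so $F$ is bandlimited), nonnegative wherever $\widehat f$ is (so $r(\widehat F)\le r(\widehat f)$), and $\widehat F(0)=\widehat f(0)<0$. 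The main verification is $r(F)\le r(f)+\eta$ for $\lambda$ large. Decomposing $f=f_+-f_-$ with $f_-\ge 0$ supported in $[-r(f),r(f)]$, the negative contribution $f_-*h_\lambda(x)$ is bounded by $\|f_-\|_1\sup_{|u|\ge|x|-r(f)}h_\lambda(u)=O(\lambda^{1-2k}|x|^{-2k})$ for $|x|\ge 2r(f)$, whereas $f_+*h_\lambda(x)\to f(x)\ge 0$ as $\lambda\to\infty$. Using the nondegeneracy of $f_+$ arranged at the outset, one extracts a pointwise lower bound for $f_+*h_\lambda$ on $[r(f)+\eta,\infty)$ that dominates the error, hence $F\in\mathcal A_+(1)$.

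\emph{Schwartz step.} Starting from $F\in\mathcal A_+(1)$ bandlimited, I multiply by a slowly varying nonnegative Schwartz bump with nonnegative Fourier transform: $G(x)=F(x)g(\varepsilon x)$ with $g\in\mathcal S(\R)$ even, $g(0)=1$, $g\ge 0$, and $\widehat g\ge 0$ (a Gaussian works). Paley--Wiener gives that $F$ is entire of exponential type with derivatives of all orders bounded on $\R$, so $G\in\mathcal S(\R)$. The sign structure transfers directly on the physical side: $G(0)=F(0)\le 0$ and $G\ge 0$ wherever $F\ge 0$, so $r(G)\le r(F)$. On the frequency side, $\widehat G=\widehat F*\varepsilon^{-1}\widehat g(\cdot/\varepsilon)$; the compactness of $\mathrm{supp}(\widehat F)$ combined with the Schwartz decay of $\widehat g$ yields uniform convergence $\widehat G\to\widehat F$ and $r(\widehat G)\le r(\widehat F)+\eta'$ for $\varepsilon$ small.

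The main obstacle is the noncompact support of $h_\lambda$ in physical space: ensuring that the last sign change of $f*h_\lambda$ does not escape past $r(f)+\eta$ requires quantitatively controlling the tail $f_-*h_\lambda$ against the possibly small tail of $f_+*h_\lambda$ at points where $f$ itself vanishes. Standard decay estimates for high-order Fej\'er kernels together with the perturbative nondegeneracy---or, if desired, explicit structural information on zero sets of minimizers from \cite{GOeSS17}---close this gap. The analogous frequency-side obstruction in the Schwartz step is much milder because $\widehat F$ already has compact support.
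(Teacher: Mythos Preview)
The central gap is in your opening move. You claim that adding $\varepsilon\varphi$ for a fixed $\varphi\in\mathcal A_+(1)\cap\mathcal S(\R)$ with $\varphi(0)<0$ lets you pass from a near-minimizer $f$ (which may have $f(0)=0$) to one with strict inequality at the origin, while keeping $r(f)r(\widehat f)$ essentially unchanged. This does not work. Any fixed $\varphi$ has its own $r(\varphi)$, and since $r(\varphi)r(\widehat\varphi)\ge\mathbb A_+(1)^2$ you cannot arrange both $r(\varphi)\le r(f)$ and $r(\widehat\varphi)\le r(\widehat f)$ for a near-minimizer $f$ unless $\varphi$ is itself essentially a minimizer---and then there is no reason $\varphi(0)<0$. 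Worse, minimizers are known from \cite{GOeSS17} to have infinitely many zeros beyond $r(f)$; at any such zero $x_1\in(r(f),r(\varphi))$ with $\varphi(x_1)<0$ one has $(f+\varepsilon\varphi)(x_1)<0$ for \emph{every} $\varepsilon>0$, so $r(f+\varepsilon\varphi)$ does not tend to $r(f)$ as $\varepsilon\to 0$. The nondegeneracies are therefore not ``absorbed in the final limit.'' Note too that your perturbation, if valid, would be dimension-free and would prove Conjecture~\ref{conj1} outright; the paper explains that exactly this reduction fails for $d>1$.

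This is precisely the obstacle the paper's Step~1 is devoted to. There one proves---via Lemma~\ref{one} (a second-moment inequality forced by $f(x_n)\ge 0$ along $x_n\to 0$), Lemma~\ref{two}, and rearrangement estimates exploiting the numerical bounds $0.45\le\mathbb A_+(1)\le 0.595$---that any minimizer with $\widehat f=f$, $f(0)=0$ is strictly negative on a punctured neighbourhood of $0$. Only then can one convolve with $\delta_{x_0}+\delta_{-x_0}+2\delta_0$ for $|x_0|$ arbitrarily small to produce $g(0)<0$ with $r(g)\le r(f)+|x_0|$. Your bandlimited and Schwartz steps are broadly in the spirit of the paper's Step~3 and Lemma~\ref{gh}, but the paper also arranges (via Lemma~\ref{eta}) a quantitative lower bound $x^2 f(x)\ge c$ for large $|x|$ before mollifying; your appeal to ``perturbative nondegeneracy'' supplies no such rate, since mere strict positivity of $f$ on $(r(f),\infty)$ gives no uniform lower bound to beat the $O(\lambda^{1-2k}|x|^{-2k})$ tail of $f_-*h_\lambda$.
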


For other combinations of signs and dimensions,  barely anything is known about regularity properties of near-minimizers for $\mathbb{A}_s(d)$, and the following conjecture  remains open in its full generality.

\begin{conjecture}\label{conj1}
For any $s\in\{+,-\}$ and $d\geq 1$, it holds that $ \mathbb{A}_s(d)
 =\mathbb{A}_s^{\mathcal B}(d)
 =\mathbb{A}_s^{\mathcal{S}}(d).$
\end{conjecture}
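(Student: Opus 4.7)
The plan is to establish the conjecture by a two-stage approximation: any $f \in \mathcal{A}_s(d)$ is approximated first by a bandlimited member of $\mathcal{A}_s(d)$, then by a Schwartz member of $\mathcal{A}_s(d)$, with $\sqrt{r(f)r(\widehat{f})}$ growing by at most a factor $(1+o(1))$ at each stage. Passing to infima this would yield $\mathbb{A}_s(d) \geq \mathbb{A}_s^{\mathcal{B}}(d)$ and $\mathbb{A}_s^{\mathcal{B}}(d) \geq \mathbb{A}_s^{\mathcal{S}}(d)$, which combined with the trivial reverse inclusions proves the conjecture.

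First I would reduce to a well-structured minimizing sequence. Averaging any competitor over the orthogonal group $\mathrm{O}(d)$ yields a radial competitor still in $\mathcal{A}_s(d)$ with no larger values of $r(f), r(\widehat{f})$, so without loss of generality all competitors are radial. Using the existence of radial $\pm s$ Fourier-eigenfunction minimizers from \cite{GOeSS17} for $s=+$ (and the analogue in \cite[Theorem 1.4]{CG19} for $s=-$), one may further assume the minimizing sequence $\{f_n\}$ consists of radial eigenfunctions with $r(f_n) = r(\widehat{f_n}) \to \mathbb{A}_s(d)$ having infinitely many zeros beyond $r(f_n)$; this oscillation structure provides the room to perturb.

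For the bandlimited step, given such a near-minimizer $f$, set $\widehat{g_\Lambda} = \widehat{f} \cdot \phi_\Lambda$, where $\phi_\Lambda = \chi_{\Lambda/2} * \chi_{\Lambda/2}$ for a nonnegative, radial, smooth bump $\chi_{\Lambda/2}$ supported in $\{|\xi| \leq \Lambda/2\}$, normalized so that $\phi_\Lambda(0) = 1$. Then $\phi_\Lambda \geq 0$ is compactly supported in $\{|\xi| \leq \Lambda\}$ and its inverse Fourier transform $|\widehat{\chi_{\Lambda/2}}|^2$ is a nonnegative approximate identity, so $g_\Lambda = f * |\widehat{\chi_{\Lambda/2}}|^2$ is bandlimited. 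The sign conditions $\widehat{g_\Lambda}(0) = \widehat{f}(0) \leq 0$ and $g_\Lambda(0) \to f(0) \leq 0$ are readily arranged (with a small additive correction in the boundary case $f(0)=0$). The Schwartz step mirrors this by passing through the multiplicative Gaussian regularization $h_\epsilon(x) = g(x) e^{-\pi \epsilon |x|^2}$, which is Schwartz (since bandlimited $g$ is $C^\infty$ with bounded derivatives), preserves the sign of $g$ pointwise so that $r(h_\epsilon) = r(g)$, and whose Fourier transform is $\widehat{g}$ convolved with a narrow Gaussian.

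The main obstacle in both steps is the same: \emph{preserving eventual nonnegativity of the Fourier dual under a spreading convolution}. Since the uncertainty principle forbids a nontrivial bump on one side from having a compactly supported transform on the other, the convolution kernel $|\widehat{\chi_{\Lambda/2}}|^2$ (respectively, the Gaussian in Fourier) has unbounded support, and hence spreads the negative values of $f$ (respectively, $\widehat{g}$) into the region $|x| > r(f)$ (respectively, $|\xi| > r(\widehat{g})$) where positivity is required. Controlling this contamination seems to require a quantitative comparison between the negative mass of the near-minimizer concentrated near the origin and its positive mass immediately beyond its last sign change---information reminiscent of the mass concentration theorems the paper later establishes, but in a more delicate pointwise form. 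A fundamentally different strategy, by interpolation at the infinite zero set of $f$ beyond $r(f)$ in the spirit of the Viazovska--Radchenko formulae used in dimensions $8, 12, 24$, would automatically respect eventual nonnegativity; however, no such interpolation machinery is presently known in arbitrary dimension, which I expect to be the decisive difficulty preventing a resolution in full generality, as opposed to the one-dimensional case handled by Theorem \ref{thm:plus-equiv}.
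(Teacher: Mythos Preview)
This statement is a \emph{conjecture}, not a theorem: the paper explicitly says it ``remains open in its full generality,'' and offers no proof. So there is no proof in the paper to compare your proposal against. What you have written is not a proof either, and to your credit you say so yourself in the final paragraph: you correctly isolate the obstruction (convolution against a kernel of unbounded support contaminates the eventually-nonnegative region with the negative mass near the origin) and correctly observe that nothing in your outline overcomes it.

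A few comments on how your outline compares with what the paper \emph{does} manage. The paper proves the conjecture only for $(s,d)=(+,1)$, and the decisive extra ingredient there is not in your sketch: one first shows (Step~1 of \S\ref{ssec:BcBbl}, via Lemma~\ref{one} and a rearrangement argument) that any minimizer is \emph{strictly} negative on a punctured neighborhood of the origin. This furnishes points $x_0$ with $f(x_0)<0$ and $|x_0|$ arbitrarily small, so that convolving with the positive-definite atomic measure $\delta_{x_0}+\delta_{-x_0}+2\delta_0$ yields a competitor with $f(0)<0$ strictly, at essentially no cost in $r(f)$. Only after securing strict negativity at the origin does the paper run the mollification you describe; the strict inequality is what absorbs the contamination error. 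In higher dimensions the paper can only locate such $x_0$ at a small but not arbitrarily small radius (see \S\ref{sec:weak-proof}), which is exactly why it obtains merely $\mathbb{A}_+^{\mathcal S}(d)\le(2-\delta_d)\mathbb{A}_+(d)$ rather than equality. Your write-up would be strengthened by flagging this ``strict negativity at $0$'' mechanism as the missing piece.

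One technical remark on your Schwartz step: multiplying a bandlimited $g$ by a Gaussian does give $r(h_\epsilon)=r(g)$, but it does \emph{not} obviously control $r(\widehat{h_\epsilon})$. Indeed $\widehat{h_\epsilon}=\widehat g * G_\epsilon$, and since $\widehat g$ generally changes sign on its support, the Gaussian tail of $\widehat{h_\epsilon}$ outside $\mathrm{supp}(\widehat g)$ can have either sign; you have simply moved the contamination problem to the Fourier side. The paper's route to the Schwartz class (Lemma~\ref{gh}) instead uses the symmetrized construction $h=\widehat g*\varphi_\delta + g\,\widehat\varphi_\delta$ with $\varphi_\delta$ compactly supported and $\widehat\varphi_\delta\ge 0$, which directly enforces $\widehat h=h$ and handles both sides at once --- but, again, only after one already has $f(0)<0$ strictly.
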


 The proof of Theorem \ref{thm:plus-equiv} builds upon a few observations from \cite{BCK10}:
Given $f\in\mathcal A_+(d)$,
there exists $x_0\in\R^d$ satisfying $|x_0|\leq r(f)$, such that $f(x_0) <0$.
Convolving with the sum of Dirac measures
$\delta_{x_0}+\delta_{-x_0}+2\delta_{0}$ yields a new function $g \in \mathcal{A}_+(d)$, satisfying $g(0) < 0$ and $r(g) \le 2r(f)$.
This provides additional room for a further convolution with an appropriate smooth function, and ultimately enables selected minimizing sequences  to be found within a smoother function space. 
To improve on this in the one-dimensional setting, we show that any minimizer of \eqref{eq:+1-up} is strictly negative on a punctured neighborhood of the origin; in particular, the point $x_0$ can be taken arbitrarily close to the origin if $d=1$.

The previous paragraph and the explicit minimizer found in \cite{CG19} together imply the existence of minimizers which are nonpositive in a neighbourhood of the origin in dimensions $d\in\{1,12\}$. 
In fact, the minimizer in twelve dimensions is nonpositive on the open ball $B_{\sqrt 2}^{12}\subseteq\R^{12}$ centered at the origin of radius $\sqrt{2}=\mathbb{A}_+(12)$,
which makes it tempting to conjecture that such a property holds in arbitrary dimensions.
The numerical examples from \cite{CG19,GOeSR19,GOeSS17} provide further evidence for this possibility; see \cite[Fig.\@ 1]{GOeSR19} for the plot of a numerical approximation of a minimizer for $\mathbb{A}_+(1)$. 
 
An adaptation of the proof of Theorem \ref{thm:plus-equiv} yields the following improvement over \eqref{eq:equivalence-Schwartz} in higher dimensions $d>1$:
There exist constants $\delta_d\in(0,1)$, satisfying $\sqrt{d}(2\pi e)^{d/2}\delta_d\to 1$, as $d\to\infty$, such that
\begin{equation}\label{eq:improvement}
\mathbb{A}_+^{\mathcal{S}}(d) \le (2-\delta_d)\mathbb{A}_+(d). 
\end{equation}
In fact, if $d>1$, then we are able to identify {\it small}, but not {\it arbitrarily small}, values of $|x_0|$ for which a given minimizer $f$ satisfies $f(x_0)<0$.
 This is the main reason why our methods do not seem sufficient to establish Conjecture \ref{conj1} for the Schwartz class
 if $(s,d)\neq (+,1)$. For further details, see \S \ref{sec:weak-proof} below. \\

Even though our current techniques do not seem fit to establish any non-trivial equivalence for the $-1$ sign uncertainty principle, one might still hope to identify other regularity properties exhibited by near-minimizers. 
As previously mentioned, problem \eqref{eq:minA-} has been solved if $d\in\{1,8,24\}$. 
Moreover, if $d\in\{8,24\}$, then the minimizer is unique modulo symmetries, belongs to the Schwartz class, and is nonnegative in a neighborhood of the origin. 
If $d=1$, then uniqueness fails, but the nonnegative property still holds in all known examples; see Figure \ref{fig:best}.
In particular, one may expect every suitable function which is sufficiently close to a minimizer to concentrate a universal proportion of its positive mass on the smallest ball centered at the origin that contains all of its negative mass. Our second main result confirms these heuristics in all dimensions. Before stating it, recall (as shown in \cite{BCK10, CG19}) that we can restrict attention to radial functions. More precisely, the quantity $\mathbb{A}_s(d)$ coincides with the minimal value of $r(g)$ in the following optimization problem.

\begin{problem1}[$s$-FELPP]\label{prob:felp}
Let $s\in\{+,-\}$. Minimize $r(g)$ over all radial $g:\R^d \to \R$ such that
\begin{itemize}
\item $g\in L^1(\R^d)\setminus\{{\bf 0}\}$ and $\widehat g=sg;$ 
\item $g(0)=0$ and $g$ is eventually nonnegative.
\end{itemize}
\end{problem1}
Given $r>0$, let $B_r^d\subseteq\R^d$ denote the open ball of radius $r$ centered at the origin, and $g_+ := \max\{g,0\}$. 

\begin{theorem}\label{thm:minus-equiv} 
Given $d \ge 1$, there exist constants $\varepsilon_d,\sigma_d>0$, such that 
\begin{equation}\label{intt}
 \int_{B_{r(g)}^d} g_+ \ge \sigma_d\|g\|_{L^1(\R^d)},
\end{equation}
whenever $g\in\mathcal A_-(d)$ is a radial function such that $\widehat g=-g, g(0)=0$, and satisfies $|r(g) - \mathbb{A}_-(d)| \le \varepsilon_d$.
\end{theorem}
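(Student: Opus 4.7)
My approach is a compactness-and-contradiction argument, leveraging the existence of radial minimizers for $\mathbb{A}_-(d)$ together with a second-moment identity for $-1$ Fourier eigenfunctions vanishing at the origin. Suppose, for contradiction, that no such $\sigma_d$ exists: after normalizing $\|g_n\|_{L^1}=1$, one obtains a sequence of radial functions $g_n$ satisfying the hypotheses, with $r(g_n)\to\mathbb{A}_-(d)$ and $P_n:=\int_{B^d_{r(g_n)}}(g_n)_+\to 0$. First I would observe that $\widehat{g_n}(0)=-g_n(0)=0$ gives $\int g_n=0$ and $\int(g_n)_\pm=1/2$, while Fourier inversion yields $\|g_n\|_{L^\infty}\le\|\widehat{g_n}\|_{L^1}=1$. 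Since $r(g_n)\to\mathbb{A}_-(d)$, each $(g_n)_-$ is eventually supported in the fixed compact set $\overline{B^d_{\mathbb{A}_-(d)+1}}$ with total mass $1/2$, so the family $\{(g_n)_-\}$ is tight.

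Next I would extract a weak-$*$ limit $g_n\rightharpoonup\mu$ as finite signed measures. The relation $\widehat{g_n}=-g_n$ passes to the limit distributionally, so $\widehat{\mu}=-\mu$; since the distribution $-\mu$ is then represented by the bounded continuous function $\widehat{\mu}$, it follows that $\mu$ is absolutely continuous with density in $L^1\cap L^\infty\cap C_0$. Decomposing $\mu=\mu_+-\mu_-$, the tightness of the negative parts gives $\int\mu_-=1/2$ exactly, while $\mu_+$ is supported in $\R^d\setminus B^d_{\mathbb{A}_-(d)}$ with mass $1/2-L$ for some $L\in[0,1/2]$ accounting for possible escape of positive mass at infinity. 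The identity $\widehat{\mu}(0)=\int\mu=-L$ together with $\widehat{\mu}(0)=-\mu(0)$ yields $\mu(0)=L\ge 0$. On the other hand, $P_n\to 0$ forces $\mu_+(B^d_{\mathbb{A}_-(d)})=0$, so $\mu\le 0$ on $B^d_{\mathbb{A}_-(d)}$ and in particular $\mu(0)\le 0$; combined, these bounds force $L=0$. Hence $\|\mu\|_{L^1}=1$, $\mu(0)=0$, and $\mu$ is a radial FELPP function with $r(\mu)\le \mathbb{A}_-(d)$; by minimality, $r(\mu)=\mathbb{A}_-(d)$, and $\mu$ itself is a minimizer.

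The final step is to contradict the resulting ``sphere-packing'' profile $\mu\le 0$ on $B^d_{\mathbb{A}_-(d)}$, $\mu\ge 0$ outside, in combination with $\widehat{\mu}=-\mu$. Then $\widehat{\mu}\ge 0$ on $B^d_{\mathbb{A}_-(d)}$ and vanishes at $0$, placing a local minimum of $\widehat{\mu}$ at the origin. A second-moment analysis using the Schwartz test functions $\phi_a(x)=|x|^2 e^{-a|x|^2}$ and the Parseval-type identity $\int(\phi_a+\widehat{\phi_a})\mu=0$ yields $\int|x|^2\mu\le 0$ upon sending $a\to 0^+$, simultaneously establishing finiteness of the second moment. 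On the other hand, the pointwise inequality $\mu(x)(|x|^2-\mathbb{A}_-(d)^2)\ge 0$ (coming from the sign profile) integrates to $\int|x|^2\mu\ge \mathbb{A}_-(d)^2\int\mu=0$. Equality is forced throughout, so $\mu(x)(|x|^2-\mathbb{A}_-(d)^2)=0$ a.e., meaning $\mu$ vanishes off the sphere $\{|x|=\mathbb{A}_-(d)\}$; as $\mu\in L^1$, $\mu\equiv 0$, contradicting $\|\mu\|_{L^1}=1$.

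The hard part will be the compactness step. Promoting the weak-$*$ limit from a generic signed measure to an $L^1\cap L^\infty$ function, and ruling out mass loss of $(g_n)_+$ at infinity, both rely essentially on the Fourier eigenfunction constraint $\widehat{g_n}=-g_n$: it furnishes the $L^\infty$ bound, forces the limit to be a genuine function via $\widehat{\mu}=-\mu$, and---through the forced sign of $\mu(0)$---traps any potential mass loss at zero.
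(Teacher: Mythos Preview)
Your overall scheme---contradiction, compactness to produce a minimizer that is nonpositive on the critical ball, then a second-moment argument---coincides with the paper's. Two of the three substeps are executed differently, and in both cases your route is legitimate. For compactness, the paper passes through weak $L^2$ limits, invokes Mazur's lemma to upgrade to strong convergence of convex combinations, and appeals to the Nazarov--Jaming uncertainty principle to guarantee the limit is nontrivial; your weak-$*$ measure argument, bootstrapping regularity from $\widehat\mu=-\mu$ and ruling out mass loss via the identity $\mu(0)=L$, reaches the same conclusion without those tools. For the final contradiction, the paper derives a \emph{strict} lower bound $\int|x|^2\mu>0$ from the Bathtub Principle together with $\|\mu\|_\infty\le 1$, whereas your pointwise inequality $\mu(x)(|x|^2-\mathbb A_-(d)^2)\ge 0$ gives $\int|x|^2\mu\ge 0$ directly and then forces $\mu\equiv 0$ from the equality case. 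Both alternatives are arguably simpler than the paper's.

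There is, however, a genuine gap in your derivation of $\int|x|^2\mu\le 0$. The Fourier transform of $\phi_a(x)=|x|^2e^{-a|x|^2}$ is, up to a positive constant, $(d-2\pi^2|\xi|^2/a)\,a^{-1-d/2}e^{-\pi^2|\xi|^2/a}$, an approximate Laplacian of a Dirac mass with $\|\widehat{\phi_a}\|_{L^1}\sim C_d/a\to\infty$. Controlling $\int\widehat{\phi_a}\,\mu$ as $a\to 0^+$ therefore requires $\mu(\xi)=O(|\xi|^2)$ near the origin, i.e.\ $C^2$-type regularity of $\mu$ at $0$; mere continuity with $\mu(0)=0$ is not enough, and the needed regularity is essentially equivalent to the very finiteness of $\int|x|^2|\mu|$ you are trying to establish, so the argument is circular. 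The paper's Lemma~\ref{one} circumvents this by replacing $\phi_a$ with $\psi_n(y)=|x_n|^{-2}\bigl(1-\cos(2\pi\langle x_n,y\rangle)\bigr)$, $x_n\to 0$: since $\widehat{\psi_n}$ is a combination of three point masses one computes $\int\psi_n\,\mu=|x_n|^{-2}\mu(x_n)\le 0$ directly from the sign profile, and a Fatou-type argument on the region $\{|y|\ge r(\mu)\}$ (where the integrand is nonnegative) then yields both finiteness of the second moment and the desired inequality in one stroke.
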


\begin{figure}[htbp]
  \centering
  \includegraphics[height=7cm]{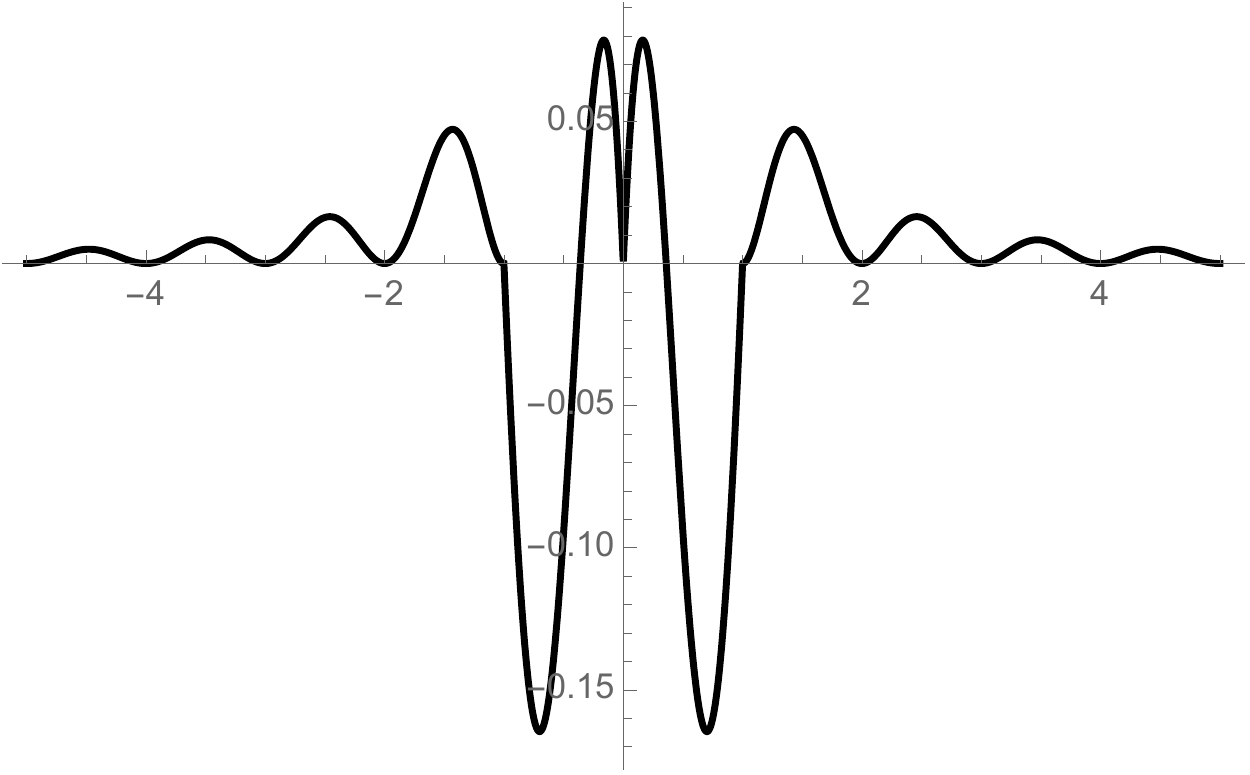} 
    \caption{Plot of  $g:=\widehat f-f$, where $f(x)=(1-|x|)_+$, $\widehat f(\xi)=\frac{\sin^2 (\pi \xi)}{(\pi \xi)^2}$.
    The function $g$ is a minimizer of the $-1$-FELPP when $d=1$.}
\label{fig:best}
\end{figure}

We conclude the Introduction with a remark connecting bandlimited functions, Poisson summation, and reconstruction formulae.
According to Theorem \ref{thm:plus-equiv}, $\mathbb{A}_+(1) = \mathbb{A}_+^{\mathcal B}(1)$. 
From this and the Poisson summation formula, a non-trivial conclusion can be  withdrawn as in \cite{L, L2}. 
We record it in the following result, 
as it may prove useful in further investigations surrounding the Fourier Eigenvalue Linear Programming Problem.

\begin{prop}\label{Poisson}
Let $f\in\mathcal A_+(1)$ be a nonzero, bandlimited function, such that \textup{supp}$(\widehat f)\subseteq [-\frac12,\frac12]$.
Then $r(f)\geq 1$.
Moreover, $r(f)=1$ if and only if there exists $\alpha>0$, such that 
\[f(x)=\alpha\frac{\sin^2(\pi \frac{x-1}2)}{x^2-1}.\]
\end{prop}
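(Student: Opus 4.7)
The plan is to apply Poisson summation over the lattice $2\Z$. Since $\widehat f$ is continuous with $\textup{supp}(\widehat f)\subseteq[-\tfrac12,\tfrac12]$, one has $\widehat f(\pm\tfrac12)=0$, so the dual sum $\tfrac{1}{2}\sum_n\widehat f(n/2)\,e^{\pi i n y}$ collapses to the single term $\widehat f(0)/2$. Thus the periodisation satisfies
\[
\sum_{n\in\Z} f(2n+y)\;=\;\tfrac{1}{2}\,\widehat f(0)\qquad\text{for a.e.\ }y\in\R.
\]

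To prove the lower bound $r(f)\geq 1$ I argue by contradiction. Suppose $r(f)<1$ and pick any $y\in(r(f),1)$. Then $|y|>r(f)$, $|y-2|>1>r(f)$, and $|2n+y|\geq 2-1>r(f)$ for the remaining integers $n$, so every term of the sum is nonnegative. Since the sum equals $\widehat f(0)/2\leq 0$, each term must vanish on a set of positive measure in $y$. By Paley--Wiener, $f$ extends to an entire function of exponential type $\pi$, so this forces $f\equiv 0$, contradicting the hypothesis.

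For the rigidity statement I proceed in two steps. First, taking $y=1$ and using $|2n+1|\geq 1=r(f)$, the same nonnegativity argument yields $f(2n+1)=0$ for every $n\in\Z$ and $\widehat f(0)=0$. At each odd integer $m$ with $|m|\geq 3$, $f(m)=0$ combined with $f\geq 0$ in a neighbourhood forces $m$ to be a local minimum, so $f$ has a double zero there; together with the simple zero of $(z^2-1)$ at $\pm 1$, this precisely matches the double zeros of $\cos^2(\pi z/2)$ at all odd integers, so
\[
F(z)\;:=\;\frac{f(z)(z^2-1)}{\cos^2(\pi z/2)}
\]
is entire.

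Second, I show that $F$ is a polynomial of degree at most two. For $|\textup{Im}\,z|\geq 1$, Paley--Wiener gives $|f(z)|\lesssim e^{\pi|\textup{Im}\,z|}$ while $|\cos(\pi z/2)|\gtrsim\sinh(\pi|\textup{Im}\,z|/2)$, yielding $|F(z)|\lesssim 1+|z|^2$. On the real line, Bernstein's inequality uniformly bounds $|f''(m)|$ at each odd $m$, which controls the Taylor expansion of $F$ on small disks around the cancelled poles and produces $|F(x)|\lesssim 1+x^2$ on $\R$; a Phragm\'en--Lindel\"of argument on the strip $\{|\textup{Im}\,z|\leq 1\}$ propagates this polynomial bound to the interior. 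An extended Liouville argument then identifies $F$ as a polynomial of degree $\leq 2$, and evenness of $f, (z^2-1), \cos^2(\pi z/2)$ reduces it to $F(z)=c_0+c_2z^2$. The constraint $f\in L^1(\R)$ forces $c_2=0$, since otherwise $f(x)\sim c_2\cos^2(\pi x/2)$ at infinity and fails to be integrable, while $f(0)\leq 0$ with $f\not\equiv 0$ fixes $c_0=\alpha>0$. Using $\cos^2(\pi x/2)=\sin^2(\pi(x-1)/2)$ gives the claimed formula; the converse is a direct verification of $f\in\mathcal A_+(1)$ and $r(f)=1$ for the explicit function. The chief obstacle is the Phragm\'en--Lindel\"of step, where controlling $F$ near the cancelled poles on $\R$ requires combining Bernstein-type uniform bounds with maximum-modulus estimates on disks around each odd integer before transferring the polynomial growth into the strip.
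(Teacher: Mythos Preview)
Your lower-bound argument via Poisson summation on $2\Z$ matches the paper's. For the characterization when $r(f)=1$, however, you take a genuinely different route. The paper sets $g(x)=f(2x+1)$, an $L^2$ function of exponential type $2\pi$ with $g(n)=0$ for all $n\in\Z$ and $g'(n)=0$ for $n\notin\{-1,0\}$, and then invokes Vaaler's interpolation formula
\[
g(x)=\Big(\frac{\sin\pi x}{\pi}\Big)^2\Big(\sum_{m\in\Z}\frac{g(m)}{(x-m)^2}+\sum_{n\in\Z}\frac{g'(n)}{x-n}\Big),
\]
which collapses to the two surviving derivative terms and yields the explicit formula after a change of variables and the evenness relation $f'(-1)=-f'(1)$. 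Your approach instead divides out the zeros by hand, forming the entire quotient $F(z)=f(z)(z^2-1)/\cos^2(\pi z/2)$, and identifies $F$ as a quadratic polynomial via Paley--Wiener growth plus a maximum-modulus/Phragm\'en--Lindel\"of argument. Both work; the paper's proof is shorter once Vaaler's formula is on the table, while yours is more self-contained, using only standard complex-analytic tools. One simplification worth noting: your ``chief obstacle'' can be bypassed entirely by bounding $|F|$ directly on circles $|z-m|=\tfrac12$ around each odd $m$ (where $|\cos(\pi z/2)|$ is bounded away from zero) and applying the maximum principle inside each disk; combined with the elementary bound away from these disks on the strip $|\operatorname{Im} z|\leq 1$ and the bound for $|\operatorname{Im} z|\geq 1$, this gives $|F(z)|\lesssim 1+|z|^2$ globally without any Phragm\'en--Lindel\"of step, and the extended Liouville theorem finishes.
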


\subsection{Outline}
We prove Theorem \ref{thm:plus-equiv} in \S \ref{sec:main-proof}, 
 Theorem \ref{thm:minus-equiv} in \S \ref{sec:minus-proof},
 and Proposition \ref{Poisson} in \S \ref{sec:ProofProp1}. 
The arguments rely on several lemmata which we choose to formulate in general dimensions whenever possible, and prove in \S \ref{sec:PfLemmata}.
We dedicate the final \S \ref{sec:DiscreteVsContinuous} to a connection with the sign uncertainty principle for Fourier series on the torus, recently established in \cite{GOeSR19}.

\subsection{Notation}
Let $\N:=\{1,2,\ldots\}$ and $\N_0:=\N\cup\{0\}$.
Given $f:\R^d\to\R$, let $f_+:=\max\{f,0\}$, and
$f_-:=\max\{-f,0\}$.
In this way, $f_+,f_-$ are nonnegative functions which are never positive at the same point, and satisfy $f=f_+-f_-$ and $|f|=f_++f_-$.
Given a set $E\subseteq \R^d$, its indicator function is denoted by ${\mathbbm 1}_E$, and its Lebesgue measure by $|E|$.
Given $r>0$, we continue to let $B_r^d\subseteq\R^d$ denote the open ball of radius $r$ centered at the origin.

\section{Proof of Theorem \ref{thm:plus-equiv}}\label{sec:main-proof}

\subsection{Proof of $\mathbb{A}_+(1) = \mathbb{A}_+^{\mathcal B}(1)$}\label{ssec:BcBbl} 
It suffices to show that 
$\mathbb{A}_+(1) \ge \mathbb{A}_+^{\mathcal B}(1)$, and
we proceed in four steps. \\

\noindent\textit{Step 1.} \textit{Let $f\in\mathcal A_+(1)$ be a minimizer for \eqref{eq:+1-up} satisfying $\widehat{f}=f$, $f(0)=0$. 
Then there exists $\varepsilon>0$, such that $f(x)<0$, for every $x\in(-\varepsilon,\varepsilon)\setminus\{0\}$.} 

 The proof of Step 1 hinges on two distinct observations, the first of which is inspired by work of Logan \cite{L, L2} on various extremal problems concerning the behavior of positive-definite bandlimited functions. The following result holds for any sign $s\in\{+,-\}$ in arbitrary dimensions $d\geq 1$, and should be compared with \cite[Lemma]{L}, where a one-dimensional variant of the same result is proved.

\begin{lemm}\label{one} 
Given  $s\in\{+,-\}$, $d\geq 1$, let $f\in\A_s(d)$ be radial,  such that $\widehat f=s{f}$, $f(0) =0$. 
Suppose that there exists a sequence $\{x_n\}_{n\in\N}\subseteq\R^d\setminus\{0\}$, such that $x_n\to 0$, as $n\to\infty$, and $sf(x_n) \ge 0$, for all $n$. Then 
$ \int_{\R^d} |y|^2 |f(y)|\,\mmd y < +\infty$, 
and 
\begin{equation}\label{eq1}
  \int_{\R^d} |y|^2 f(y) \,\mmd y \le 0.
\end{equation}
\end{lemm}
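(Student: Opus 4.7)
The plan is to reduce the statement to an elementary Taylor-expansion argument for $f$ at the origin, via the formal identity
\begin{equation*}
\Delta \widehat{f}(0) \;=\; -4\pi^2 \int_{\R^d} |y|^2 f(y)\,\mmd y.
\end{equation*}
Once the second moment $\int |y|^2 |f(y)|\,\mmd y$ is shown to be finite, this identity is rigorous by standard differentiation under the Fourier integral, and the relation $\widehat{f} = sf$ turns it into $s\Delta f(0) = -4\pi^2 \int |y|^2 f(y)\,\mmd y$. Combined with radiality and $f(0) = 0$, the expected $C^2$-regularity at the origin then forces the Taylor expansion
\begin{equation*}
f(x) \;=\; \frac{\Delta f(0)}{2d}|x|^2 + o(|x|^2), \qquad x \to 0,
\end{equation*}
and the hypothesis $sf(x_n) \ge 0$ along $x_n \to 0$, $x_n \ne 0$, immediately yields $s\Delta f(0) \ge 0$, hence $\int |y|^2 f(y)\,\mmd y \le 0$.

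The main work is to establish the finiteness $\int |y|^2 |f(y)|\,\mmd y < \infty$. Fourier inversion (valid since $\widehat{f} = sf \in L^1$) together with $f$ being real and even gives $sf(x) = \int f(y)\cos(2\pi\langle x, y\rangle)\,\mmd y$. Since $f(0) = 0$ forces $\int f\,\mmd y = sf(0) = 0$, and since $sf(x_n) \ge 0$, I deduce
\begin{equation*}
\int_{\R^d} f(y)\bigl(1 - \cos(2\pi\langle x_n, y\rangle)\bigr)\,\mmd y \;\le\; 0, \qquad n \in \N.
\end{equation*}
The eventually-nonnegative condition makes $f_-$ compactly supported and bounded, so splitting $f = f_+ - f_-$ and applying $1 - \cos\theta \le \theta^2/2$ on the $f_-$-piece produces the uniform bound
\begin{equation*}
\int_{\R^d} f_+(y)\,\frac{1 - \cos(2\pi\langle x_n, y\rangle)}{|x_n|^2}\,\mmd y \;\le\; 2\pi^2 \int_{\R^d} f_-(y)|y|^2\,\mmd y \;<\; \infty.
\end{equation*}

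Now, because $f_+$ is radial, the kernel inside the left-hand integral may be replaced by its average under the action of $SO(d)$ without changing the value of the integral; a short computation using the symmetry identity $\int_{S^{d-1}} \omega_1^2\,\mmd\mu(\omega) = 1/d$ and dominated convergence on the sphere shows that this rotational mean tends pointwise to $2\pi^2|y|^2/d$ as $|x_n| \to 0$, independently of the direction $x_n/|x_n|$. Fatou's lemma applied to the radialized inequality then gives $\int f_+(y)|y|^2\,\mmd y \le d \int f_-(y)|y|^2\,\mmd y < \infty$, which, combined with the compact support of $f_-$, furnishes the desired bound $\int |y|^2 |f(y)|\,\mmd y < \infty$. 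Feeding this back into the Laplacian identity and invoking the Taylor-expansion argument above completes the proof. The most delicate step is precisely this Fatou deduction: since the sequence of directions $x_n/|x_n|$ may oscillate, and since no integrable dominant for $|y|^2 f_+(y)$ is a priori available, the radialization manoeuvre — which exploits the radial symmetry of $f_+$ to pin down the exact limit $2\pi^2|y|^2/d$ — is what makes the argument go through.
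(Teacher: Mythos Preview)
Your argument is correct and rests on the same core identity as the paper's proof: from $\widehat f = sf$, $f(0)=0$, and $sf(x_n)\ge 0$ one obtains
\[
\int_{\R^d} f(y)\,\frac{1-\cos(2\pi\langle x_n,y\rangle)}{|x_n|^2}\,\mmd y \le 0,
\]
and the task is to pass to the limit $x_n\to 0$. The two executions differ in how they handle radiality and how they conclude. To cope with wandering directions $x_n/|x_n|$, you average the kernel over $SO(d)$ (legitimate since $f_+$ is radial) and then apply Fatou; the paper instead notes that, $f$ being radial, one may simply replace each $x_n$ by $|x_n|e_k$ for any fixed coordinate direction $e_k$, run the argument for that $k$, and sum --- this sidesteps the averaging entirely. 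For the final sign inequality, you route through the $C^2$-regularity $\widehat f\in C^2$ granted by $\int|y|^2|f|<\infty$ and read off the sign from the Taylor expansion $f(x)=\tfrac{\Delta f(0)}{2d}|x|^2+o(|x|^2)$; the paper takes the limit directly in the integral along the axis $e_k$ and sums over $k$. Both routes are sound. The paper's is a bit shorter, since the alignment trick makes the limiting kernel equal to $2\pi^2 y_k^2$ on the nose and eliminates the need for either the rotational mean or the Taylor step; your version, on the other hand, makes the underlying reason (the sign of $\Delta f(0)$) more transparent.
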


 The second observation is that a chain of rearrangement-type inequalities can be set up in such a way as to contradict estimate \eqref{eq1}. 
A similar approach already proved fruitful in \cite[\S 4]{GOeSS17}.
To implement it in the present context, let $f\in\A_+(1)$ be a minimizer for \eqref{eq:+1-up}, which we suppose to be $L^1$-normalized, $\|f\|_{L^1}=1$, and  to satisfy $\widehat f={f}$, $f(0)=0$; see \cite[\S 3]{GOeSS17}. If $f$ fails to be strictly negative on any punctured neighborhood of the origin, then the hypotheses of Lemma \ref{one} for $s=+1$ are verified.  Writing $f=f_+-f_-$, we then have that
\begin{equation}\label{eq:rearrange} 
 \int_{0}^{\infty} y^2f(y) \,\mmd y = \int_{0}^{r(f)} y^2 f_+ (y) \,\mmd y - \int_{0}^{r(f)} y^2f_-(y) \,\mmd y + \int_{r(f)}^{\infty} y^2 f(y) \,\mmd y\leq 0.
\end{equation}
Setting $\sigma(f) := \|f_+\|_{L^1(0,r(f))}$, and appealing to \cite[Lemma 12]{GOeSS17}, we can bound each of the integrals on the right-hand side of \eqref{eq:rearrange} as follows:
\begin{align}
&\int_{0}^{r(f)} y^2 f_+(y) \,\mmd y \ge \int_{0}^{\sigma(f)} y^2 \,\mmd y; \label{eq:I1}\\
&\int_{0}^{r(f)} y^2 f_-(y)\, \mmd y \le \int_{r(f)-\frac{1}{4}}^{r(f)} y^2 \,\mmd y; \label{eq:I2}\\
&\int_{r(f)}^{\infty} y^2 f(y) \,\mmd y \ge \int_{r(f)}^{r(f)+\frac{1}{4} - \sigma(f)} y^2 \,\mmd y. \label{eq:I3}
\end{align}
To see why this is the case, note that $\|f\|_{L^\infty}\leq 1$, and that
\begin{align*}
\int_\R f_++\int_\R f_-&=\int_\R |f|=1;\\
\int_\R f_+-\int_\R f_-&=\int_\R f=\widehat f(0)=0,
\end{align*}
and thus $\int_\R f_+=\int_\R f_-=\frac12$;
moreover, the functions $f, f_\pm$ are even, and so their masses are equally spread over the positive and negative half-lines.
Estimates \eqref{eq:rearrange}--\eqref{eq:I3} immediately imply that
 \[  \frac{\sigma(f)^3}{3}  - \frac{r(f)^3 - (r(f)- \frac{1}{4})^3}{3}+ \frac{(r(f) + \frac{1}{4} - \sigma(f))^3 - \sigma(f)^3}{3}\leq 0,\]
which can be equivalently rewritten as
 \begin{equation}\label{import}
  \Big(r(f)+\frac{1}{4}\Big)\sigma(f) \Big(r(f)+\frac{1}{4} - \sigma(f)\Big) \ge \frac{r(f)}{8}.
 \end{equation}
We seek for a sufficiently good upper bound for $\sigma(f)$ which will then force the desired contradiction. 
With this purpose in mind, observe that the left-hand side of \eqref{import} defines an increasing function of $\sigma(f)$,  provided $\sigma(f) < \frac{1}{4}$. 
The next result provides a slight improvement over \cite[Lemma 14]{GOeSS17} in terms of the admissible radius $r(f)$. 

\begin{lemm}\label{two} 
Let $f\in\A_+(1)$ be such that  $\|f\|_{L^1}=1$, $\widehat f={f}$, $f(0) =0$, and $r(f)\in[\frac14,\frac1{\sqrt{2}}]$. 
Then the following inequality holds:
\begin{equation}\label{pointwisefplus}
f_+(x) \le \frac{1}{2} + \frac{\sin(2 \pi (r(f)-\frac{1}{4})x) - \sin(2 \pi r(f) x)}{\pi x},
\end{equation}
 for every $x \in [0,r(f)]$.
\end{lemm}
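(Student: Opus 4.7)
The plan is to exploit the self-duality $\widehat{f}=f$ to write $f(x)$ as a cosine transform, split the domain of integration into pieces where $f$ has a definite sign, and apply a bathtub-type rearrangement to each piece. Since $f$ is real and even,
\[f(x) \;=\; 2\int_0^\infty f(y)\cos(2\pi xy)\,\mmd y,\]
and decomposing $f=f_+-f_-$ on $[0,r(f)]$ while using $f=f_+$ on $[r(f),\infty)$ produces three terms. Together with the pointwise bound $\|f\|_{L^\infty}\le 1$ and the identities $\int_0^\infty f_- = \tfrac{1}{4}$, $\int_0^{r(f)} f_+ = \sigma(f)$, and hence $\int_{r(f)}^\infty f = \tfrac{1}{4}-\sigma(f)$, all derived in the paragraph preceding the lemma, this gives sharp $L^\infty$ and $L^1$ control on each of the three pieces.

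The key monotonicity observation is that, for $x,y\in[0,r(f)]$ with $r(f)\le \tfrac{1}{\sqrt{2}}$, one has $2\pi xy\le \pi$, so $y\mapsto\cos(2\pi xy)$ is decreasing on $[0,r(f)]$. The bathtub principle then maximizes the $f_+$-contribution by taking $f_+ = \mathbbm{1}_{[0,\sigma(f)]}$, yielding $\frac{\sin(2\pi x\sigma(f))}{2\pi x}$. Symmetrically, $-\int_0^{r(f)} f_-(y)\cos(2\pi xy)\,\mmd y$ is maximized by $f_- = \mathbbm{1}_{[r(f)-1/4,\,r(f)]}$, which is precisely where the lower bound $r(f)\ge \tfrac{1}{4}$ is needed for this interval to fit inside $[0,r(f)]$; this yields $\frac{\sin(2\pi x(r(f)-1/4))-\sin(2\pi x r(f))}{2\pi x}$. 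Finally, the tail integral is estimated crudely via $|\cos|\le 1$, giving $\int_{r(f)}^\infty f(y)\cos(2\pi xy)\,\mmd y \le \tfrac{1}{4}-\sigma(f)$.

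Assembling these estimates and multiplying by $2$ produces an upper bound for $f(x)$ that still depends on $\sigma(f)$. The final step is that $\sin u \le u$ for $u\ge 0$ gives $\frac{\sin(2\pi x\sigma(f))}{\pi x}\le 2\sigma(f)$, which exactly cancels the $-2\sigma(f)$ appearing in the tail term $2(\tfrac{1}{4}-\sigma(f))$. What remains is precisely the right-hand side of \eqref{pointwisefplus}, and since the elementary estimate $|\sin a-\sin b|\le |a-b|=\tfrac{\pi x}{2}$ shows that this quantity is always nonnegative, the bound transfers from $f(x)$ to $f_+(x)=\max\{f(x),0\}$.

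The argument is technically light once the correct decomposition is in place; the only subtleties are that \emph{both} endpoints of the hypothesis $r(f)\in[\tfrac{1}{4},\tfrac{1}{\sqrt{2}}]$ play distinct and necessary roles (one to accommodate the rearranged $f_-$, the other to preserve monotonicity of $\cos$ on the relevant square), and that the $\sin u\le u$ slack is tight enough to produce the clean cancellation of the $\sigma(f)$-dependence. If $r(f)>\tfrac{1}{\sqrt{2}}$, the cosine changes sign on $[0,r(f)]^2$ and the bathtub step breaks down; extending the bound beyond that regime would therefore require a substantially different input.
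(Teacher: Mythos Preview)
Your argument is correct and lands on the same inequality, but it takes a longer route than the paper's. The paper begins from the identity $f(x)-f(0)=-\int_{\R}f(y)\bigl(1-\cos(2\pi xy)\bigr)\,\mmd y$; since $f(0)=0$ and $1-\cos\ge 0$, the entire $f_+$-contribution is immediately $\le 0$ and discarded, yielding directly
\[
f_+(x)\le 2\int_0^\infty f_-(y)\bigl(1-\cos(2\pi xy)\bigr)\,\mmd y.
\]
From here a \emph{single} rearrangement of $f_-$ (the same one you perform) produces the right-hand side of \eqref{pointwisefplus}. By contrast, you keep the raw $\cos$-kernel, rearrange both $f_+$ and $f_-$, estimate the tail crudely, and then rely on $\sin u\le u$ to make the $\sigma(f)$-dependence cancel. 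That cancellation is exactly what the paper's $1-\cos$ trick achieves in one stroke: your two $f_+$-estimates together recover the discarded inequality $\int f_+(y)\bigl(\cos(2\pi xy)-1\bigr)\,\mmd y\le 0$. So the two proofs are equivalent at heart, with the paper exploiting $f(0)=0$ up front to avoid the detour through $\sigma(f)$.
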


 With the two observations in place, we may now finish the proof of Step 1. 
Since $\|f\|_{L^\infty}\leq\|\widehat f\|_{L^1}=\|f\|_{L^1}= 1$ and $\|f_-\|_{L^1(0,r(f))}= \frac{1}{4}$,
the following superlevel set estimate holds:
\begin{equation}\label{eq:superlevel}
|\{x \in [0,r(f)]: f(x) \geq 0\}| \leq r(f) - \frac{1}{4}.
\end{equation} 
Since $f$ is a minimizer for \eqref{eq:+1-up}, and $\widehat f=f$, it follows from \cite[Theorem~2]{GOeSS17} that $0.45\leq r(f) \le 0.595$; in particular, $r(f)\in[\frac14,\frac1{\sqrt{2}}]$. 
As a consequence, we may appeal to \eqref{eq:superlevel} in order to estimate
\begin{align} 
\sigma(f) &= \int_{[0,r(f)]} f_+  
\le  \sup_{J \subseteq [0,r(f)] \atop |J| = r(f)- \frac14}  \int_J f_+ \label{chain1}\\
 & \le \int_{\frac{1}{4}}^{r(f)}  \left( \frac{1}{2} + \frac{\sin(2 \pi (r(f)-\frac{1}{4})x) - \sin(2 \pi r(f) x)}{\pi x} \right) \,\mmd x\notag
 :=\Phi(r(f)), 
\end{align}
where from the first to the second line we invoked Lemma \ref{two} and \cite[Lemma 11]{GOeSS17}, using the fact that the integrand on the right-hand side defines an increasing function of $x$.
We are thus reduced to a straightforward analysis of the function $\Phi$. 
It is easy to check that 
 $\Phi(r(f)) \leq\Phi(0.595)< 0.121$, and therefore the chain of inequalities \eqref{chain1} implies $\sigma(f) <  0.121$. 
But  if $\sigma(f) <  0.121$ and $0.45\leq r(f) \le 0.595$, then \eqref{import} does not hold.
This yields the desired contradiction, and concludes the proof of Step 1.\\

\noindent\textit{Step 2.} \textit{There exists a minimizing sequence $\{f_n\}_{n\in\N}\subseteq\mathcal A_+(1)$ for \eqref{eq:+1-up}, such that the following conditions hold, for every $n$:
\begin{itemize}
\item[(a)] $f_n(0)<0;$
\item[(b)] $f_n(x)>0$, for every $|x|> r(f_n);$
\item[(c)] $\widehat f_n={f}_n;$
\item[(d)] $x^2f_n(x)\geq c_n$, for some $c_n>0$ and  all sufficiently large $|x|$.
\end{itemize}} 
 Let $f\in\A_+(1)$ be a minimizer for $+1$-FELPP. By Step 1, there must exist a sequence $\{x_n\}_{n\in\N}\subseteq(0,\infty)$ such that $x_n\to 0$, as $n\to\infty$, and $f(x_n) < 0$, for every $n$. Define an associated sequence $\{T_n\}_{n\in\N}$ of tempered distributions  via
\[T_n := \delta_{x_n} + \delta_{-x_n} + 2\delta_0.\]
Setting $g_n := T_n * f$,  one easily checks that $\{g_n\}_{n\in\N}\subseteq\A_+(1)$ is a minimizing sequence for \eqref{eq:+1-up}. Indeed, the quantity
\[g_n(x) = f(x-x_n)+f(x+x_n)+2f(x)\]
is nonnegative if $x \ge r(f) + x_n$, and satisfies $g_n(0) = 2f(x_n) < 0$
(in particular, $g_n$ does not vanish identically). 
On the other hand, 
\[\widehat T_n(\xi) = 2 \cos(2\pi x_n \xi) + 2 \ge 0,\]
 and so $\widehat{g}_n(0) = 4\widehat{f}(0) \leq 0$.
By the scaling argument detailed in \cite[\S 3.3]{GOeSS17}, we lose no generality in assuming that $r(\widehat g_n)=r({g}_n)$. In this case,  $\{g_n+\widehat{g}_n\}_{n\in\N}\subseteq\A_+(1)$ is a minimizing sequence for \eqref{eq:+1-up} satisfying conditions (a) and (c). 
Condition (b) can be achieved by further adding a suitable Gaussian function: Setting 
\[h_n := g_n+\widehat{g}_n - \frac{g_n(0)+\widehat{g}_n(0)}{2} \exp(-\pi\cdot^2),\] 
one again checks that $\{h_n\}_{n\in\N}\subseteq\A_+(1)$ is a minimizing sequence for \eqref{eq:+1-up} which satisfies conditions (a)--(c). 
 In order to further ensure condition (d), we make use of the following simple observation.

\begin{lemm}\label{eta} 
Given $d\geq 1$, there exists a function $\eta \in \A_+(d)$, such that 
$\widehat\eta = \eta$,
 $\eta(0)< 0$,
and $|x|^{d+1}\eta(x) \geq  1$, for all sufficiently large values of $|x|$. 
\end{lemm}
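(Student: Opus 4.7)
The plan is to exhibit $\eta$ explicitly as a short linear combination of classical functions whose Fourier transforms are known, arranged so that $\widehat\eta=\eta$ can be checked by direct computation, and so that the critical lower bound $|x|^{d+1}\eta(x)\geq 1$ is inherited from a single summand of exactly the right polynomial decay.

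The key building block is the Poisson-type Fourier pair
\[
\widehat{P_d}(\xi)=\alpha_d\,e^{-2\pi|\xi|},\qquad P_d(x):=(1+|x|^2)^{-(d+1)/2},
\]
valid for the explicit constant $\alpha_d=\pi^{(d+1)/2}/\Gamma((d+1)/2)>0$, together with the self-duality of the Gaussian $G(x):=e^{-\pi|x|^2}$. Writing $E(x):=e^{-2\pi|x|}$ and applying Fourier inversion yields $\widehat E=\alpha_d^{-1}P_d$, so that the combination $Q:=P_d+\alpha_d E$ satisfies $\widehat Q=Q$ by direct computation. The function $Q$ is everywhere positive, radial, even, lies in $L^1(\R^d)$, and, since $E$ decays exponentially, obeys $|x|^{d+1}Q(x)\to 1$ as $|x|\to\infty$.

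I would then set
\[
\eta(x):=2Q(x)-\lambda\,G(x),\qquad \lambda:=2(1+\alpha_d)+1.
\]
Each summand is a $+1$ Fourier eigenfunction, so $\widehat\eta=\eta$; moreover $\eta$ is real, even, and $\eta,\widehat\eta\in L^1(\R^d)$. By construction $\eta(0)=2(1+\alpha_d)-\lambda=-1<0$, and since both $G$ and $E$ decay super-polynomially, $|x|^{d+1}\eta(x)\to 2$ as $|x|\to\infty$. In particular $\eta$ is eventually nonnegative and $|x|^{d+1}\eta(x)\geq 1$ from some radius on, so all four defining conditions of $\A_+(d)$ are met (they collapse, via $\widehat\eta=\eta$, to $\eta(0)\leq 0$ together with eventual nonnegativity).

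There is no substantive obstacle here: once the Fourier pair $(P_d,E)$ is recalled the argument is purely algebraic. The only genuine design choice is to single out a summand whose decay is of the exact critical order $|x|^{-(d+1)}$, which is why the Poisson kernel appears and cannot simply be replaced by a Schwartz-class ingredient; the Gaussian correction and the exponential tail of $E$ affect only the regime where polynomial behavior is inconsequential, so they do not interfere with the asymptotic constant.
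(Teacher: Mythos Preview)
Your argument is correct. The Poisson--exponential Fourier pair you invoke is standard (with the paper's normalization, $\widehat{e^{-2\pi|\cdot|}}(\xi)=\alpha_d^{-1}(1+|\xi|^2)^{-(d+1)/2}$), and from it the self-duality $\widehat Q=Q$, the value $\eta(0)=-1$, and the asymptotic $|x|^{d+1}\eta(x)\to 2$ all follow by direct computation. Integrability of each summand is immediate, so $\eta\in\A_+(d)$ with $\widehat\eta=\eta$ as required.

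Your route differs from the paper's. There the building block is $\varphi:=\chi\ast\widehat\chi$ with $\chi=\mathbbm 1_{B_1^d}\ast\mathbbm 1_{B_1^d}$, so that $\widehat\chi(\xi)=\bigl(J_{d/2}(2\pi|\xi|)/|\xi|^{d/2}\bigr)^2$; the critical lower bound $\varphi(x)\gtrsim|x|^{-(d+1)}$ then comes from the Bessel asymptotic $J_{d/2}(r)\sim(2/\pi r)^{1/2}\cos(r-\tfrac{\pi(d+1)}4)$, after which one symmetrizes to $\psi=\varphi+\widehat\varphi$ and subtracts a Gaussian to force $\eta(0)<0$. Your construction is more elementary: by choosing the Poisson kernel you get a function whose decay is \emph{exactly} $|x|^{-(d+1)}$ with an explicit leading constant, so no asymptotic analysis or oscillatory-integral estimate is needed, and the final constant in the lower bound is transparent. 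The paper's construction, by contrast, produces a $\psi$ that is the sum of a bandlimited function and a compactly supported one, which is not a feature you need for this lemma but is closer in spirit to the surrounding bandlimited arguments in \S\ref{ssec:BcBbl}. Either approach works; yours is shorter and avoids the Bessel machinery entirely.
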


 Let $\eta\in\A_+(1)$ be given by Lemma \ref{eta}, and pick the smallest $r_0>0$ such that $x^2\eta(x) \ge 1$, for every $|x| \ge r_0$. 
Given $n\in\N$, set $\beta_n=1$ whenever $r(h_n)\geq r_0$. 
Otherwise, given $\delta>0$ which is sufficiently small so that $r(h_n)+\frac{\delta}n<r_0$,  set $\beta_n=\beta_n(\delta)>0$ in such a way that 
\[h_n(x) +\beta_{n} \eta(x) > 0, \text{ for every }x\in\left[r(h_n) + \tfrac{\delta}{n}, r_0\right].\] 
That such a choice of $\beta_n$ is possible follows from the fact that each function $h_n$ is eventually (strictly) positive. 
Then the sequence $\{f_n:=h_n+\beta_n\eta\}_{n\in\N}\subseteq \mathcal A_+(1)$ is minimizing for \eqref{eq:+1-up}, and satisfies properties (a), (c), and (d). Letting $\delta\to 0^+$, we ensure that condition (b) is fulfilled as well.
This concludes the verification of Step 2.\\

\noindent{\it Step 3.} \textit{Let $f\in\A_+(1)$ be such that $\|f\|_{L^1} = 1$, $\widehat f = {f}$, $f(0)<0$.
Suppose that there exist constants $c,R>0$, such that
\begin{itemize}
\item[(i)] $f(x) > 0$, for every $|x| > r(f);$
\item[(ii)] $x^2f(x) \ge {c}$, for every $|x| \ge R.$
\end{itemize}
Then, given $\varepsilon >0,$ 
there exists a bandlimited function $g \in \A_+(1)$, such that $g(x) > 0$, for every $|x| \ge r(f)+\varepsilon$.} 
\vspace{.1cm}

Fix a nonnegative, even, compactly supported, smooth function  $\psi\in C_0^\infty(\R)$, such that  $\|\psi\|_{L^2}=1$. 
Set  $\varphi := \psi \ast \psi$ and  $\varphi_{\delta} (x) := \varphi(\delta x).$ 
As  $\delta\to0^+$, the family $\{\widehat\varphi_{\delta}\}_{\delta>0}$ constitutes an approximation to the identity.
Therefore the bandlimited function $g_\delta:=f * \widehat\varphi_{\delta}$ should provide a good approximation for $f$, for small enough values of $\delta$. 
We turn to the details.

Let $f,c,R$ be as above, and let $\varepsilon>0$ be arbitrary but given. 
We show that $\delta=\delta(f,c,R,\varepsilon)$ can be chosen sufficiently small, so that $g=g_\delta$ belongs to $\mathcal A_+(1)$, and satisfies $g(x)>0$, for every $|x|\geq r(f)+\varepsilon$.
Let $ R_1\geq R$ be such that 
\begin{equation}\label{esthatphi}
|\widehat{\varphi}(\xi)| \le 10^{-6}{c}{|\xi|^{-3}}, \text{ for every } |\xi|\ge R_1.
\end{equation} 
This is certainly possible since $\widehat{\varphi}$ is rapidly decreasing.
By letting $\delta\to0^+$, the difference $f-g_\delta$ can be made uniformly close to $0$ in the interval $[r(f)+\varepsilon,2R_1]$. Thus it suffices to consider $|x|>2R_1$, in which case the following chain of inequalities holds, as long as $\delta>0$ is sufficiently small:
\begin{align}
g_\delta(x)
& = \int_{-\infty}^\infty f(x-\xi)\widehat\varphi_{\delta}(\xi)\, \mmd \xi 
\ge \int_{-\delta^{1/2}}^{\delta^{1/2}}f(x-\xi)\widehat\varphi_{\delta}(\xi) \,\mmd \xi 
- \int_{x-r(f)}^{x+r(f)}\widehat\varphi_{\delta}(\xi) \,\mmd \xi \notag\\
& \ge \frac{c}{4x^2} \int_{-\delta^{1/2}}^{\delta^{1/2}}\widehat\varphi_{\delta}(\xi)\, \mmd \xi 
-\int_{x-r(f)}^{x+r(f)}\widehat\varphi_{\delta}(\xi) \,\mmd \xi
 \ge \frac{c}{8x^2} - \int_{x-r(f)}^{x+r(f)}\widehat\varphi_{\delta}(\xi) \,\mmd \xi.\label{bigest} 
\end{align}
In the first inequality, we used hypothesis (i) to ensure that $f(x-\xi)\geq 0$ unless $\xi\in[x-r(f),x+r(f)]$, and the fact that $\|f\|_{L^\infty}\leq 1$;
the second inequality holds for sufficiently small $\delta>0$, in view of hypothesis (ii);
and the third inequality holds for sufficiently small $\delta>0$ since $\{\widehat\varphi_{\delta}\}_{\delta>0}$ is an approximation to the identity.
Invoking \eqref{esthatphi}, we have that
\[0\leq \widehat\varphi_{\delta}(\xi) 
= \delta^{-1}\widehat{\varphi}(\delta^{-1}{\xi}) 
\le \frac{1}{\delta} \frac{c}{10^6 |\delta^{-1}{\xi}|^3} 
= \frac{c \delta^2}{10^6|\xi|^3},\]
for every $|\xi| \geq \delta R_1$. 
Since $|x|> 2R_1$, it follows that $|x-r(f)| \ge \delta R_1$, and therefore the last integral on the right-hand side of \eqref{bigest} can be estimated as follows:
\begin{equation}\label{smallest}
 \int_{x-r(f)}^{x+r(f)}\widehat\varphi_{\delta}(\xi) \,\mmd \xi 
\le \frac{c\delta^2}{10^6} \int_{x-r(f)}^{x+r(f)} \,\frac{\mmd \xi}{|\xi|^3} 
\le \frac{c/2}{10^6}\frac{\delta^2}{(x-r(f))^2}  
\le \frac{2 c}{10^6}\frac{\delta^2}{x^2} 
 \le \frac{c}{10^4x^2}, 
\end{equation}
provided $2\delta^2 < 10^2.$ 
Estimates \eqref{bigest}, \eqref{smallest} together imply that $g_\delta(x) >0$, for every $|x|>2R_1$, as long as $\delta>0$ is sufficiently small. 
Finally, one easily checks that $g_\delta\in\mathcal A_+(1)$, provided $\delta>0$ is sufficiently small.
For instance, $g_\delta(0)\leq 0$ since $f(0)<0$ (the strict inequality is crucial here) and $f$ is continuous. Step 3 follows.\\ 

\noindent{\it Step 4. Conclude.} Consider a minimizing sequence $\{f_n\}_{n\in\N}\subseteq\mathcal A_+(1)$ for \eqref{eq:+1-up}, satisfying $\|f_n\|_{L^1}=1$, $f_n(0)<0$, $f_n(x)>0$ if $|x|\geq r(f_n)$, $\widehat f_n=f_n$, and $x^2f_n(x)\geq c_n$ if $|x|$ is sufficiently large.
The existence of such a sequence follows from Step 2. 
Running the proof of Step 3 on each $f_n$ individually, we find that there exists a bandlimited function $g_n=f_n\ast\widehat\varphi_\delta\in\mathcal A_+(1)$, such that $r(g_n)\leq r(f_n)+\frac1n$.
Since $\widehat g_n=\widehat f_n\varphi_\delta$ has pointwise the same sign as $\widehat f_n$, 
we may let $n\to\infty$ and conclude that $\{g_n\}_{n\in\N}\subseteq\mathcal A_+(1)$ is a minimizing sequence for \eqref{eq:+1-up} consisting of bandlimited functions, as desired.\\

\subsection{Proof of $\mathbb{A}_+(1) = \mathbb{A}_+^{\mathcal{S}}(1)$}\label{ssec:Sch-equiv}

Following \cite[\S 1]{BCK10}, consider the restricted classes
\begin{align*}
\tilde{\mathcal{A}}_+(d)&:=\{f\in\mathcal{A}_+(d): \widehat f=f, f(0)<0\};\\
\tilde{\mathcal{A}}_+^{\mathcal{S}}(d)&:=\{f\in\mathcal{A}_+(d)\cap\Sch(\R^d): \widehat f={f}, f(0)<0\},
\end{align*}
and define the corresponding optimal constants
$$\tilde{\mathbb{A}}_+(d):=\inf_{f\in \tilde{\mathcal{A}}_+(d)\setminus\{{\bf 0}\}}\sqrt{r(f)r(\widehat{f})};
\;\;\;
\tilde{\mathbb{A}}_+^{\Sch}(d):=\inf_{f\in \tilde{\mathcal{A}}_+^{\Sch}(d)\setminus\{{\bf 0}\}}\sqrt{r(f)r(\widehat{f})}.$$
Our next result reveals that these constants coincide in all dimensions.

\begin{prop}\label{B-}
Let $d\geq 1$. Then $\tilde{\mathbb{A}}_+(d) = \tilde{\mathbb{A}}_+^{\Sch}(d)$.
\end{prop}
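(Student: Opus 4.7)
The trivial inclusion $\tilde{\mathcal{A}}_+^{\Sch}(d) \subseteq \tilde{\mathcal{A}}_+(d)$ gives $\tilde{\mathbb{A}}_+(d) \le \tilde{\mathbb{A}}_+^{\Sch}(d)$, so the plan is to prove the reverse inequality by producing, from an arbitrary $f \in \tilde{\mathcal{A}}_+(d)$, a family $\{g_\delta\}_{\delta>0} \subseteq \tilde{\mathcal{A}}_+^{\Sch}(d)$ with $\limsup_{\delta\to 0^+} r(g_\delta) \le r(f)$. The central tension is that convolving $f$ with a compactly supported mollifier smooths $f$ but neither preserves $\widehat f = f$ nor produces rapid decay, while pointwise multiplication by a Schwartz function forces decay but does not enhance smoothness. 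The idea is to combine both operations into a single object and then symmetrize it under the Fourier transform.

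Concretely, I would fix a nonnegative, even $\varphi \in C_c^\infty(B_1^d)$ with $\int\varphi = 1$, set $\Phi := \varphi \ast \varphi$ and $\Phi_\delta(x) := \delta^{-d}\Phi(x/\delta)$, and form
\[
h_\delta := (\Phi_\delta \ast f)\cdot \widehat\Phi_\delta, \qquad g_\delta := h_\delta + \widehat{h_\delta}.
\]
The factorization $\Phi = \varphi \ast \varphi$ ensures $\widehat\Phi_\delta = \widehat\varphi(\delta\,\cdot)^{2} \ge 0$, which is the key positivity used throughout. The identity $\widehat{g_\delta} = g_\delta$ follows from $\widehat f = f$ together with the evenness of $\Phi$, and $g_\delta \in \Sch(\R^d)$ because $\Phi_\delta \ast f$ is $C^\infty$ with uniformly bounded derivatives (convolution of an $L^1$ function with a $C_c^\infty$ function) while $\widehat\Phi_\delta \in \Sch(\R^d)$, so Leibniz yields $h_\delta \in \Sch(\R^d)$, whence $\widehat{h_\delta}, g_\delta \in \Sch(\R^d)$.

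To control the radius, on the region $|x| \ge r(f) + 2\delta$ both summands of $g_\delta$ are pointwise nonnegative: $(\Phi_\delta \ast f)(x) \ge 0$ there by the support condition on $\Phi_\delta$, and $\widehat{h_\delta} = (\widehat\Phi_\delta \cdot f) \ast \Phi_\delta \ge 0$ because $\widehat\Phi_\delta \cdot f$ is nonnegative outside $B_{r(f)}^d$ (again using $\widehat\Phi_\delta \ge 0$); hence $r(g_\delta) \le r(f) + 2\delta$. For strict negativity at the origin, $h_\delta(0) = (\Phi_\delta \ast f)(0)\widehat\Phi_\delta(0) \to f(0)$ as $\delta \to 0^+$, since $\widehat\Phi(0) = 1$ and $\Phi_\delta$ is an approximation of the identity, while $\widehat{h_\delta}(0) = \int f\,\widehat\Phi_\delta\,\Phi_\delta$ also tends to $f(0)$ by Parseval followed by the change of variables $u = z/\delta$ and dominated convergence. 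Thus $g_\delta(0) \to 2f(0) < 0$, so $g_\delta \in \tilde{\mathcal{A}}_+^{\Sch}(d)$ for all sufficiently small $\delta$, and $\tilde{\mathbb{A}}_+^{\Sch}(d) \le r(f)$ after letting $\delta \to 0^+$ and infimizing over $f$.

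The main obstacle I anticipate is calibrating a single construction that preserves four different features of $f$ simultaneously: the self-duality $\widehat g = g$, full Schwartz regularity (both smoothness and rapid decay), eventual nonnegativity with almost the same radius, and strict negativity at the origin. The composite object $h_\delta + \widehat{h_\delta}$ is engineered precisely to decouple these: convolution with $\Phi_\delta$ supplies smoothness, multiplication by the Schwartz nonnegative factor $\widehat\Phi_\delta$ supplies decay and preserves the sign of $f$ at infinity, Fourier symmetrization restores $\widehat g = g$, and the factorization $\Phi = \varphi \ast \varphi$ is what makes every sign inequality in the radius argument go the right way.
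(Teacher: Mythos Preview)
Your proof is correct and coincides with the paper's approach: the paper builds $h=\widehat g\ast\varphi_\delta+g\,\widehat\varphi_\delta$ with $g=f\ast\varphi_\delta$ and $\varphi_\delta=\psi_\delta\ast\psi_\delta$, which is exactly your $g_\delta=h_\delta+\widehat{h_\delta}$ once one uses $\widehat f=f$ to rewrite $\widehat{h_\delta}=(f\cdot\widehat\Phi_\delta)\ast\Phi_\delta$. The only cosmetic difference is that the paper verifies $h(0)<0$ by checking each summand is negative for small $\delta$, whereas you pass to the limit $g_\delta(0)\to 2f(0)<0$; both arguments are valid.
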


\begin{proof}[Proof of Proposition \ref{B-}]
It suffices to show that $\tilde{\mathbb{A}}_+(d) \geq \tilde{\mathbb{A}}_+^{\Sch}(d)$.
With this goal in mind, let $f\in\tilde{\mathcal{A}}_+(d)$. 
Given $\delta>0$, consider a nonnegative, radial, compactly supported, smooth function $\psi_\delta\in C_0^\infty(\R^d)$, such that
$\text{supp}(\psi_\delta)\subseteq B_\delta^d$;
further assume $\psi_\delta$ to be $L^1$-normalized, 
$\|\psi_\delta\|_{L^1}=1.$
Define $\varphi_\delta:=\psi_\delta\ast\psi_\delta$, $g:=f\ast\varphi_\delta$, and 
\begin{equation}\label{ghh_0}
 h:=\widehat{g}\ast\varphi_\delta+{g}\widehat\varphi_\delta.
\end{equation}
The following lemma lists the key properties of the function $h$.

\begin{lemm}\label{gh}
Let $f\in\tilde{\mathcal{A}}_+(d)$.
Given $\varepsilon>0$, there exists $\delta>0$, such that the function $h$ defined in \eqref{ghh_0} above satisfies the following properties:
\begin{itemize}
\item[(a)] $h\in\Sch(\R^d), \widehat h={h}, h(0)<0;$
\item[(b)] $r(h)\leq r(f)+\varepsilon$.
\end{itemize}
\end{lemm}

Part (a) of Lemma \ref{gh} implies that $h\in\tilde{\mathcal A}_+^{\Sch}(d)$, and 
the inequality $\tilde{\mathbb{A}}_+(d) \ge \tilde{\mathbb{A}}_+^{\Sch}(d)$ then follows from part (b) of Lemma \ref{gh} by letting $\varepsilon\to 0^+$.
\end{proof}

The desired identity, $\mathbb{A}_+(1) = \mathbb{A}_+^{\mathcal{S}}(1)$, follows from the chain of inequalities
\begin{equation}\label{chain}
\mathbb{A}_+(1) 
\leq \mathbb{A}_+^{\Sch}(1) 
\leq \tilde{\mathbb{A}}_+^{\Sch}(1)
\leq\tilde{\mathbb{A}}_+(1)
\leq \mathbb{A}_+(1).
\end{equation}
The first two inequalities in \eqref{chain} are trivial, and the third one follows from Proposition \ref{B-}. 
Our next result addresses the last inequality in \eqref{chain}.

\begin{prop}\label{prop:tildenotilde}
$\tilde{\mathbb{A}}_+(1) \leq \mathbb{A}_+(1)$.
\end{prop}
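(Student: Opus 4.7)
The plan is to observe that the heavy lifting required for this inequality has already been done during the proof that $\mathbb{A}_+(1)=\mathbb{A}_+^{\mathcal{B}}(1)$, specifically in Step 2 of \S\ref{ssec:BcBbl}. The strategy is to exhibit a minimizing sequence for the unrestricted problem \eqref{eq:+1-up} that automatically lies in the restricted class $\tilde{\mathcal A}_+(1)$, thereby establishing $\tilde{\mathbb{A}}_+(1) \leq \mathbb{A}_+(1)$ directly, with no further construction needed.

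In more detail, first I would invoke Step 2 to produce a minimizing sequence $\{f_n\}_{n\in\N}\subseteq \mathcal{A}_+(1)$ for \eqref{eq:+1-up} satisfying, in particular, properties (a) $f_n(0)<0$ and (c) $\widehat{f}_n = f_n$. These two conditions, together with membership in $\mathcal{A}_+(1)$, are exactly the defining requirements of the restricted class
\[
\tilde{\mathcal A}_+(1) = \{f\in\mathcal A_+(1) : \widehat f = f,\ f(0)<0\}.
\]
Consequently, $\{f_n\}_{n\in\N}\subseteq \tilde{\mathcal A}_+(1)\setminus\{\mathbf 0\}$ is a legitimate test sequence for $\tilde{\mathbb{A}}_+(1)$. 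Taking the infimum gives
\[
\tilde{\mathbb{A}}_+(1) \;\leq\; \lim_{n\to\infty} \sqrt{r(f_n)\,r(\widehat{f}_n)} \;=\; \mathbb{A}_+(1),
\]
as desired.

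The entire content of the proposition therefore rests on the earlier construction, and in particular on Step 1, which guaranteed that any radial minimizer of the $+1$-FELPP is strictly negative on a punctured neighbourhood of the origin. That strict negativity is what allows the translations $x_n\to 0$ in the Dirac-measure convolution $\delta_{x_n}+\delta_{-x_n}+2\delta_0$ to enforce $f_n(0)<0$ without inflating $r(f_n)$ beyond $\mathbb{A}_+(1)+o(1)$; the subsequent self-duality symmetrization and Gaussian correction preserve both $\widehat{f}_n=f_n$ and $f_n(0)<0$. This is precisely the step that fails in higher dimensions, where only the weaker bound \eqref{eq:improvement} can be extracted, and would accordingly be the main obstacle if one tried to mimic the argument for $d>1$. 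In dimension one, however, everything is already in place, and Proposition \ref{prop:tildenotilde} follows by this bookkeeping argument.
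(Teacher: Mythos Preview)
Your proposal is correct and follows essentially the same approach as the paper: both rely on Step~1 together with the Dirac-measure convolution and self-duality symmetrization to produce functions in $\tilde{\mathcal A}_+(1)$ with $r$-value close to $\mathbb A_+(1)$. The only difference is that you invoke Step~2 directly, whereas the paper rewrites the construction from scratch (with an explicit dilation in place of the rescaling appealed to in Step~2); your version is the more economical of the two.
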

While the argument in Proposition \ref{B-} could be adapted to handle the present case as well, we offer a different proof which does not hinge on Lemma \ref{gh} and could prove of independent interest.
\begin{proof}[Proof of Proposition \ref{prop:tildenotilde}]
Let $f\in{\mathcal{A}}_+(1)$ be a minimizer for \eqref{eq:+1-up} satisfying $\widehat f=f$, $f(0)=0$; 
in particular, $r(f)=\mathbb{A}_+(1)$.
Given $\varepsilon>0$, we can invoke  Step 1 of \S \ref{ssec:BcBbl} in order to ensure that 
there exists $x_0\in (-\varepsilon,\varepsilon)$, such that $f(x_0)<0$. 
Similarly to Step 2 of \S \ref{ssec:BcBbl},  consider the measure 
$T:=\delta_{x_0}+\delta_{-x_0}+2\delta_0,$
which is positive-definite in the sense that
\begin{equation}\label{eq:hatTpos}
\widehat{T}(\xi)=2\cos(2\pi x_0\xi)+2\geq 0,
\end{equation}
for every $\xi$, and define $g:=T\ast f$. Since
\begin{equation}\label{gintermsoff}
g(x)=f(x-x_0)+f(x+x_0)+2f(x),
\end{equation}
 it follows that $g$ is real-valued, integrable, and even. Moreover, 
$g(0)=2f(x_0)<0.$
Since $|x_0|\leq \varepsilon$, it follows from \eqref{gintermsoff} that 
\begin{equation}\label{ggeq0}
g(x)\geq 0, \text{ if } |x|\geq r(f)+\varepsilon.
\end{equation} 
In light of \eqref{eq:hatTpos}, we have that
$\widehat{g}(\xi)=\widehat{T}(\xi)\widehat{f}(\xi)\geq 0$
 if and only if 
 $\widehat{f}(\xi)\geq 0$.
In particular, 
\begin{equation}\label{hatggeq0}
\widehat{g}(\xi)\geq 0, \text{ if } |\xi|\geq r(f).
\end{equation} 
Moreover,
$\widehat{g}(0)=4f(0)=0.$
Consider the dilation $h:=g(\lambda \cdot)$, where $\lambda:=(1+\varepsilon/ r(f))^{1/2}$. 
Then $h(0)=g(0)<0$, $\widehat{h}(0)=\lambda^{-1}\widehat{g}(0)=0$,
and the functions $h, \widehat{h}$ are nonnegative on the interval $[{(r(f)^2+\varepsilon r(f))^{1/2}},\infty)$. 
Real-valuedness, integrability, and evenness of $h,\widehat h$ follow from those of $g$. 
Therefore $h+\widehat{h}\in\tilde{\mathcal{A}}_+(1)$, whence $\tilde{\mathbb{A}}_+(1)\leq (r(f)^2+\varepsilon r(f))^{1/2}$.
Letting $\varepsilon\to 0^+$, it follows that $\tilde{\mathbb{A}}_+(1)\leq r(f)=\mathbb{A}_+(1)$. 
\end{proof} 

\subsection{A short proof of the higher dimensional improvement \eqref{eq:improvement}}\label{sec:weak-proof} 
Let $f\in \mathcal A_+(d)$ be a nonzero function, for which the product $r(f)r(\widehat f)$ is sufficiently close to $\mathbb{A}_+(d)^2$.
By the usual reductions, we may assume that the function $f$ is radial, and satisfies $f = \widehat{f},\, f(0) = 0$, and $\|f\|_{L^1} = 1.$ 
In particular,
\[|\{x \in B_{r(f)}^d \colon f(x) < 0\}| \ge \int_{B_{r(f)}^d} f_- = \tfrac{1}{2}.\]
This implies the following superlevel set estimate:
\begin{equation}\label{eq:bound-positive}
| \{ x \in B_{r(f)}^d \colon f(x) \geq 0 \}| \le \nu_d r(f)^d - \tfrac{1}{2},
\end{equation}
where, as usual, $\nu_d=2d^{-1}\pi^{\frac d2}\Gamma(\frac d2)^{-1}$ is the Lebesgue measure of the unit ball in $\R^d$. 
Suppose that  $r>0$ is such that $f(x) \ge 0$, for every $x \in B_r^d$. 
Trivially, $r<r(f)$. 
In order to improve on this, we simply note that \eqref{eq:bound-positive} implies
$r^d \le  r(f)^d - (2 \nu_d)^{-1}$.
Thus there exists $x_0 \in \R^d$, satisfying 
\[|x_0| \le \left( r(f)^d - (2 \nu_d)^{-1}\right)^{\frac1d},\text{ and }f(x_0) < 0.\]  
The rest of the argument follows very similar ideas to those in \S \ref{ssec:Sch-equiv}. 
In particular, 
we obtain the following inequality: 
\begin{equation}\label{eq:bound-diff}
\mathbb{A}_+^{\Sch}(d) \le \left( 1 + \left( 1 - \frac{1}{2\nu_d {\mathbb{A}_+(d)}^d}\right)^{\frac1d} \right) \mathbb{A}_+(d).
\end{equation}
Setting $\theta_d := (2\nu_d)^{-1} \mathbb{A}_+(d)^{-d}$, we  estimate 
\[1 - \left( 1- \theta_d \right)^{\frac1d} = \frac{1}{d} \int_{1-\theta_d}^1 t^{\frac1d - 1} \, \mmd t \ge \frac{\theta_d}{d}.\] 
Stirling's formula and  \eqref{eq:lim-inf-sup-plus}  together imply that
$\theta_d \geq C \sqrt{d} (2\pi e)^{-d/2}$, for some absolute constant $C>0$.
Plugging this back into \eqref{eq:bound-diff} yields \eqref{eq:improvement}.

\section{Proof of Theorem \ref{thm:minus-equiv}}\label{sec:minus-proof} 
The proof of Theorem \ref{thm:minus-equiv} proceeds by contradiction. 
We start by establishing the following claim.
{\it If Theorem \ref{thm:minus-equiv} does not hold, then there exists a radial minimizer $f\in\mathcal A_-(d)$ of \eqref{eq:minA-}, satisfying $\widehat f=-f$, $f(0)=0$,  such that $f(x)\leq 0$ for all $x\in B_{r(f)}^d$.}

To see why this is necessarily the case, start by observing that, if Theorem \ref{thm:minus-equiv} does not hold, then there exists a minimizing sequence $\{f_n\}_{n\in\N}\subseteq \mathcal A_-(d)$ for \eqref{eq:minA-} consisting of radial functions, satisfying
$\widehat f_n=-f_n$, 
$f_n(0)=0$, 
$\|f_n\|_{L^1} = 1$,   
 $|r(f_n) - \mathbb{A}_-(d)| \leq \frac1n$, and
\begin{equation}\label{eq:NotThm2}
\int_{B_{r(f_n)}^d} (f_n)_+ \le \frac{1}{n}.
\end{equation}
No generality is lost in assuming that the sequence $\{r(f_n)\}_{n\in\N}$ is strictly decreasing.
By Jaming's higher dimensional version of Nazarov's uncertainty principle \cite{Ja07}, 
there exists a constant $K_d>0$ such that, for every $n \in\N$,
\begin{equation}\label{eq:NazarovJaming}
\int_{B_{r(f_n)}^d} f_n \leq -K_d.
\end{equation}
In fact, this amounts to a  straightforward modification of \cite[Lemma 23]{GOeSS17}, as was already observed in \cite[\S 3.2]{CG19}.
Since $\widehat f_n = - {f_n}$ and $\|f_n\|_{L^1} = 1,$ it follows that 
\[\|f_n\|_{L^2}^2\leq \|f_n\|_{L^1}\|f_n\|_{L^\infty}\leq \|f_n\|_{L^1}\|\widehat f_n\|_{L^1}=\|f_n\|_{L^1}^2=1.\] 
The Banach--Alaoglu Theorem then implies, possibly after extraction of a subsequence, that the sequence $\{f_n\}_{n\in\N}$ converges weakly to some function $f\in L^2$. 
Since $\mathcal A_-(d)$ is convex, we can then apply Mazur's Lemma to produce a sequence $\{g_n\}_{n\in\N}$ which converges strongly to $f$ in $L^2$, with each $g_n$ being a finite convex combination of elements from $\{f_m\}_{m\geq n}$.
For each $n \in\N$, $g_n$ is radial, $\widehat g_n=-g_n$, $g_n(0)=0$, $\|g_n\|_{L^2} \le 1$, and, possibly after passing to a further subsequence, the following additional properties hold:
\begin{align}
  &|r(g_n)- \mathbb{A}_-(d)| \leq \frac1n;\label{eq:P1}\\
  &\int_{B_{r(g_n)}^d} (g_n)_+ \le \frac{1}{n};\label{eq:P2}\\
  &\int_{B_{r(g_n)}^d} g_n \le -\frac{K_d}{2};\label{eq:P3}\\
  &g_n \to f \text{ almost everywhere, as } n\to\infty.\label{eq:P4}
\end{align}
Property \eqref{eq:P1} follows from the fact that the sequence $\{r(f_n)\}_{n\in\N}$ is decreasing, together with each $g_n$ being a convex combination of elements from the {\it tail} sequence $\{f_m\}_{m\geq n}$. 
Properties \eqref{eq:P2}, \eqref{eq:P3} follow from the latter observation, together with the elementary inequality  $(x+y)_+ \le x_+ + y_+$, valid for every $x,y \in \R$, and estimates \eqref{eq:NotThm2}, \eqref{eq:NazarovJaming}. 
Property \eqref{eq:P4} follows at once by extracting a further subsequence. 
In particular, from \eqref{eq:P3} it follows that 
\[\int_{B_{r(f)}^d} f \le - \frac{K_d}{4},\]
and therefore $f$ does not vanish identically.
Moreover,  \eqref{eq:P2} implies that
\[\int_{B_{r(f)}^d} f_+ = 0,\]
hence $f \le 0 \text{ in } B_{r(f)}^d$.
To conclude the proof of the claim, we still have to check that $f$ satisfies all required properties.
From \eqref{eq:P1} it follows that $r(f)=\mathbb A_-(d)$.
From \eqref{eq:P4} it follows that $f$ is radial.
That $\widehat f=-f$ follows at once from weak convergence.
Setting $r_1:=r(f_1)=\sup_{n\in\N} r(f_n)$, we may 
apply Fatou's Lemma to the sequence $\{g_n +   \mathbbm 1_{B_{r_1}^d}\}_{n\in\N}$ (consisting of nonnegative functions which converge almost everywhere to $f+  \mathbbm 1_{B_{r_1}^d}$) and deduce that $f\in L^1$, and that 
\[\widehat f(0)=\int_{\R^d} f \le \liminf_{n\to\infty} \int_{\R^d} g_n = 0.\]
In particular, ${f}(0) = - \widehat f(0) \geq 0$. 
Since $f \le 0 \text{ in } B_{r(f)}^d$, it follows that $f(0) = 0$. 
This concludes the verification of the claim.\\ 

We will need a higher dimensional version of the rearrangement inequalities used in \S \ref{ssec:BcBbl}. 
The following elementary result from \cite[\S 1.14]{LL} suffices for our application.

\begin{lemm}[Bathtub Principle, \cite{LL}]\label{thm:bathtubp} 
Let $h: \R^d \to \R$ be a measurable function, such  that $|\{x \in \R^d\colon h(x) < t\}|$ is finite for all $t \in \R$. 
Let the number $G >0$ be given, and define a class of measurable functions on $\R^d$ by
\[
\mathcal C_G = \left\{ g: \R^d \to [0,1]: \int_{\R^d} g = G\right\}.
\]
Then the minimization problem 
\begin{equation}\label{intfg}
I=\inf_{g\in\mathcal C_G}\int_{\R^d} gh
\end{equation}
is solved by $g(x) =    \mathbbm 1_{\{h < s\}}(x) + c     \mathbbm 1_{\{h = s\}}(x),$ where 
\begin{align*}
s &= \sup \{ t \in \R \colon |\{x\in\R^d: h(x) < t\}| \le G\};\\
c |\{x \in \R^d\colon h(x) = s\}| &= G - |\{ x \in \R^d \colon h(x)<s\}|.
\end{align*}
The minimizer is unique if 
$G=|\{ x \in \R^d \colon h(x)<s\}|$ or if 
$G=|\{ x \in \R^d \colon h(x)\leq s\}|$.
\end{lemm}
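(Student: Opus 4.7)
\medskip

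\noindent\textbf{Proof proposal for Lemma \ref{thm:bathtubp}.}

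The plan is the classical level-set comparison argument. First I would check that the candidate
\[
g^*(x) := \mathbbm{1}_{\{h<s\}}(x) + c\,\mathbbm{1}_{\{h=s\}}(x)
\]
is admissible. The hypothesis that $|\{h<t\}|$ is finite for every $t\in\R$ ensures that $s$ is a well-defined real number (the set of admissible $t$ is nonempty because $|\{h<t\}|\to 0$ as $t\to-\infty$ by finiteness, and is bounded above because it must stay below $G$). The choice of $c$ is legitimate provided $|\{h=s\}|>0$, since by definition of $s$ one has $|\{h<s\}|\le G\le |\{h\le s\}|$, so $c\in[0,1]$; if $|\{h=s\}|=0$, then automatically $G=|\{h<s\}|$ and the term with $c$ is irrelevant. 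In particular $0\le g^*\le 1$ and $\int g^*=G$, so $g^*\in\mathcal C_G$.

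Next, for any $g\in\mathcal C_G$ I would write
\[
\int_{\R^d} gh - \int_{\R^d} g^*h = \int_{\{h<s\}}(g-g^*)h + \int_{\{h=s\}}(g-g^*)s + \int_{\{h>s\}}(g-g^*)h.
\]
Since $\int(g-g^*)=0$, the middle integral equals
\[
s\int_{\{h=s\}}(g-g^*) = -s\int_{\{h<s\}}(g-g^*) - s\int_{\{h>s\}}(g-g^*),
\]
which after substitution yields the key identity
\begin{equation}\label{eq:bathtubkey}
\int_{\R^d} gh - \int_{\R^d} g^*h = \int_{\{h<s\}}(g-g^*)(h-s) + \int_{\{h>s\}}(g-g^*)(h-s).
\end{equation}
On $\{h<s\}$ one has $g^*=1\ge g$, so $g-g^*\le 0$, while $h-s<0$; on $\{h>s\}$ one has $g^*=0\le g$, so $g-g^*\ge 0$, while $h-s>0$. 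In either case the integrand is nonnegative, which proves $\int gh\ge \int g^*h$ and hence that $g^*$ attains the infimum $I$.

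For the uniqueness statement, suppose that $G=|\{h<s\}|$, so that $g^*=\mathbbm{1}_{\{h<s\}}$ almost everywhere. If $g\in\mathcal C_G$ achieves the infimum, then the right-hand side of \eqref{eq:bathtubkey} vanishes; since each integrand is a product of two factors of matching sign with strict inequality on $\{h\ne s\}$, it follows that $g=g^*$ almost everywhere on $\{h<s\}\cup\{h>s\}$. The mass constraint $\int g=G=\int_{\{h<s\}}g^*$ combined with $g\ge 0$ then forces $g=0$ almost everywhere on $\{h=s\}$, agreeing with $g^*$ there as well. The case $G=|\{h\le s\}|$ is analogous: then $c=1$, $g^*=\mathbbm{1}_{\{h\le s\}}$, and the same reasoning together with $g\le 1$ pins $g$ down on $\{h=s\}$. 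The one step that deserves some care is the bookkeeping when $|\{h=s\}|>0$ and $G$ is strictly between $|\{h<s\}|$ and $|\{h\le s\}|$: in this regime the minimizer is genuinely non-unique, as any $g$ equal to $g^*$ off $\{h=s\}$ with $\int_{\{h=s\}}g=G-|\{h<s\}|$ will do, and I would simply remark on this to close the argument.
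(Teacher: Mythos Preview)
Your proof is correct and is precisely the standard level-set comparison argument. Note, however, that the paper does not actually prove this lemma: it is quoted verbatim from Lieb--Loss \cite[\S 1.14]{LL} and used as a black box in the proof of Theorem~\ref{thm:minus-equiv}. Your argument is essentially the one given there, so there is nothing to compare.
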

 
If Theorem \ref{thm:minus-equiv} does not hold, then we have already verified the existence of a radial minimizer $f\in\mathcal A_-(d)$ of \eqref{eq:minA-}, satisfying $\widehat f=-f$, $f(0)=0$,  such that $f\leq 0$ on $B_{r(f)}^d$.
Normalizing $\|f\|_{L^1} = 1$, so that $\|f\|_{L^\infty} \le 1$, we then have that
\begin{equation}\label{eq:inout1/2}
-\int_{B_{r(f)}^d} f = \int_{\R^d \backslash B_{r(f)}^d} f = \frac{1}{2}.
\end{equation}
Invoking Lemma \ref{one} with $s=-1$, and then Lemma \ref{thm:bathtubp} with $h(y)=|y|^2$, we have that
\begin{align}
0\geq\int_{\R^d} |y|^2 f(y) \, \mmd y 
 &= \int_{\R^d \backslash B_{r(f)}^d} |y|^2 f(y) \, \mmd y +  \int_{B_{r(f)}^d} |y|^2 f(y) \, \mmd y \label{eq:negtocontr1}\\
 &\ge  \int_{B_{t}^d\setminus B_{r(f)}^d} |y|^2\, \mmd y - \int_{B_{r(f)}^d\setminus B_{s}^d} |y|^2 \, \mmd y \label{eq:negtocontr2}\\
 &= \frac{\nu_d}{1+2/d} (t^{d+2} + s^{d+2} - 2 r(f)^{d+2}). \label{eqq1}
\end{align}
Here, $\nu_d :=2d^{-1}\pi^{\frac d2}\Gamma(\frac d2)^{-1}$ denotes the Lebesgue measure of the unit ball $B_1^d\subseteq\R^d$ and, in light of \eqref{eq:inout1/2}, the radii $s<t$ in \eqref{eq:negtocontr2} are defined in such a way as to ensure that 
$|B_{r(f)}^d\setminus B_{s}^d| = |B_{t}^d\setminus B_{r(f)}^d| =\frac{1}{2}$.
Equivalently, 
\[s = \left(r(f)^d - \frac{1}{2\nu_d}\right)^{1/d},\; t = \left(r(f)^d + \frac{1}{2\nu_d}\right)^{1/d}.\]
The strict convexity of the function $x \mapsto |x|^{1 + \frac{2}{d}}$ implies that
\[{t^{d+2} + s^{d+2}} > 2\left(\frac{t^d + s^d}{2}\right)^{\frac{d+2}{d}} = 2r(f)^{d+2},\]
hence \eqref{eqq1} defines a positive quantity.
The chain of inequalities \eqref{eq:negtocontr1}--\eqref{eqq1} then leads to the desired contradiction.
This concludes the proof of  Theorem \ref{thm:minus-equiv}.


\section{Proof of Proposition \ref{Poisson}}\label{sec:ProofProp1}

Let $f\in\mathcal A_+(1)\setminus\{{\bf 0}\}$ be a bandlimited function, such that \textup{supp}$(\widehat f)\subseteq [-\frac12,\frac12]$.
With the Fourier transform normalized as in \eqref{eq:FTnorm}, the Poisson summation formula implies
\[\sum_{n\in\Z} f(n)=\sum_{k\in\Z}\widehat{f}(k).\]
Dilating by a parameter $\alpha>0$, and translating by an arbitrary $\beta\in\R$, yields
\begin{equation}\label{dilPoisson}
\alpha\sum_{n\in\Z} f(\alpha n+\beta)=\sum_{k\in\Z}\widehat{f}\left(\tfrac k{\alpha}\right) \exp\left(2\pi i \tfrac{\beta}{\alpha} k\right).
\end{equation}
The function $\widehat{f}$ is continuous, and so $\widehat{f}(\pm\frac 12)=0$. 
Thus the right-hand side of \eqref{dilPoisson} equals $\widehat{f}(0)$ provided $\alpha\leq 2$,
and so
\begin{equation}\label{eq:AfterPoisson}
\sum_{n\in\Z} f(\alpha n+\beta)=\alpha^{-1}\widehat{f}(0)\leq 0,
\end{equation}
for every $(\alpha,\beta)\in (0,2]\times\R$. 
Aiming at a contradiction, suppose  $r(f)<1$. 
Given $\alpha\in (0,2]$ so that $r(f)\leq\frac{\alpha}2\leq1$, set $\beta=\frac{\alpha}2$. 
From \eqref{eq:AfterPoisson}, it follows that 
\begin{equation}\label{zerosum}
\sum_{n\in\Z} f((2n+1)\beta)\leq 0,
\end{equation}
for every $\beta\in[r(f),1]$.
But $f(x)\geq 0$, for every $|x|\geq r(f)$, and so \eqref{zerosum} implies
$$f(x)=0,\text{ for every } x\in[r(f),1].$$
Being the Fourier transform of a compactly supported function,  the function $f$ extends to an entire function on the whole complex plane, and as such it cannot vanish on a non-degenerate interval without being identically zero. 
This shows that $r(f)\geq 1$.

Now, suppose $r(f)=1$.
Then replicating the argument from the previous paragraph with $\beta=\frac{\alpha}2=1$ yields  
\begin{equation}\label{cond1}
f(2n+1)=0,\text{ for every } n\in\Z.
\end{equation}
Since the function $f$ does not change signs at the zeros $\pm 3, \pm 5,\ldots$, we also have that
\begin{equation}\label{cond2}
f'(2n+1)=0,\text{ for every } n\in\Z\setminus\{-1,0\}.
\end{equation}
Set $g(x):=f(2x+1)$. Then $g\in L^2(\R)$, and $\widehat{g}$ is supported on $[-1,1]$. 
By the Paley--Wiener--Schwarz Theorem \cite{Sc52}, the function $g$ coincides with the restriction to $\R$ of a complex-valued entire function of exponential type $2\pi$. It follows from Vaaler's interpolation formula \cite[Theorem 9]{V} that
$$g(x)=\left(\frac{\sin \pi x}{\pi}\right)^2\left(\sum_{m\in\Z}\frac{g(m)}{(x-m)^2}+\sum_{n\in\Z}\frac{g'(n)}{x-n}\right),$$
where the expression on the right-hand side converges uniformly on compact subsets of the real line.
Conditions \eqref{cond1}, \eqref{cond2} then translate into 
$$g(x)=\left(\frac{\sin \pi x}{\pi}\right)^2\left(\frac{g'(-1)}{x+1}+\frac{g'(0)}{x}\right),$$
which can be rewritten in terms of the original function $f$ as
$$f(2x+1)=\left(\frac{\sin \pi x}{\pi}\right)^2\left(\frac{2f'(-1)}{x+1}+\frac{2f'(1)}{x}\right).$$
Since $f$ is even, $f'(-1)=-f'(1)$. Consequently,
$$f(2x+1)=2f'(1)\left(\frac{\sin \pi x}{\pi}\right)^2\frac{1}{x(x+1)}.$$
A change of variables yields 
\begin{equation}\label{bestf}
f(x)=\frac{8f'(1)}{\pi^2}\frac{\sin^2(\pi \frac{x-1}2)}{x^2-1},
\end{equation}
which belongs to the class $\mathcal{A}_+(1)$ if and only if $f'(1)\geq 0$.
For the converse direction, note that the Fourier transform of the function $f$ given by \eqref{bestf}  is easy to calculate:
\[\widehat{f}(\xi)=\frac{4f'(1)}{\pi}({  \mathbbm  1}_{[-\frac 12,0]}-{ \mathbbm  1}_{[0,\frac 12]})(\xi)\sin(2\pi \xi).\]
In particular, this shows that $f$ is a bandlimited function with Fourier support $[-\frac12,\frac12]$; see Figure \ref{fig:best2}. 
It is  easy to check that $f\in\mathcal{A}_+(1)$, and that $r(f)=1$.
This concludes the proof of Proposition \ref{Poisson}.

\begin{figure}[htbp]
  \centering
  \includegraphics[height=7cm]{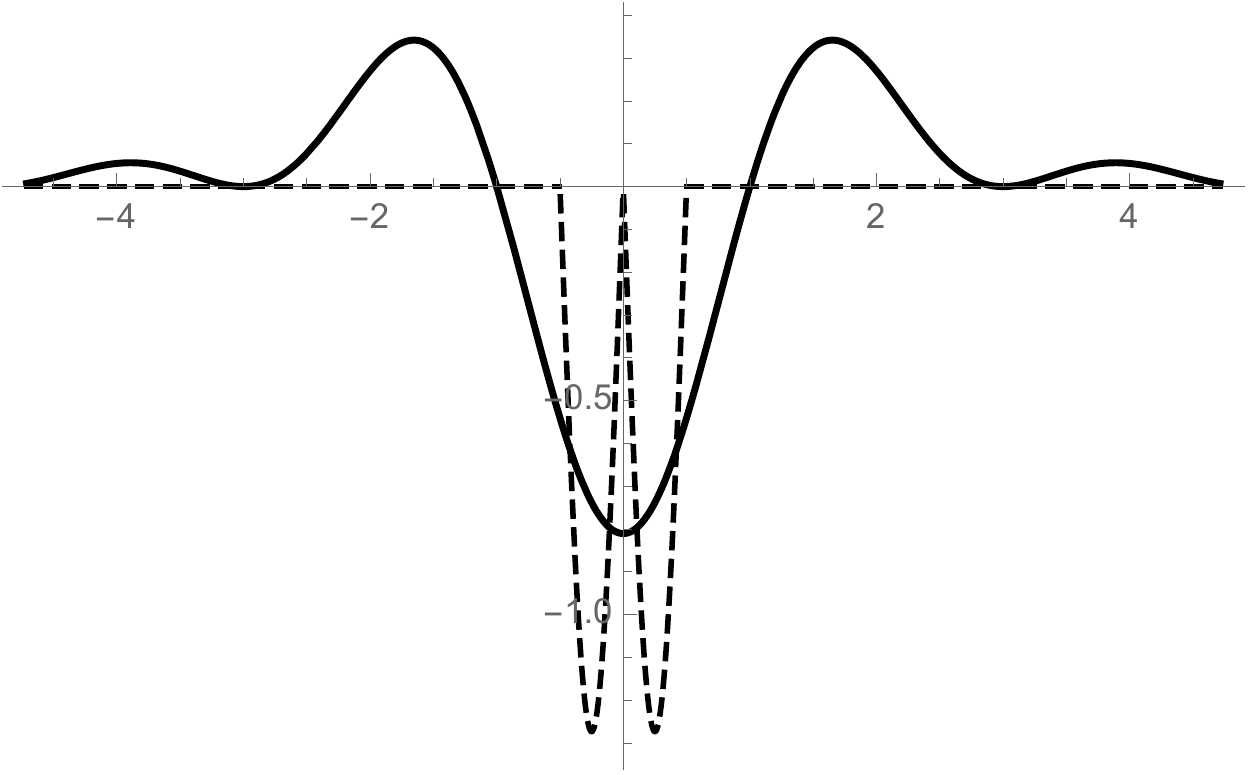} 
    \caption{Plot of the function $f(x)=\frac{8}{\pi^2}\frac{\sin^2(\pi \frac{x-1}2)}{x^2-1}$ (thick), and of its Fourier transform $\widehat{f}(\xi)=\frac{4}{\pi}({ \mathbbm 1}_{[-\frac 12,0]}-{ \mathbbm 1}_{[0,\frac 12]})(\xi)\sin(2\pi \xi)$ (dashed).}
\label{fig:best2}
\end{figure}


\section{Proofs of Lemmata}\label{sec:PfLemmata}

\begin{proof}[Proof of Lemma \ref{one}]
Let $s\in\{+,-\}$ and $f\in\mathcal A_-(d)$ be as in the statement of the lemma.
Since $\widehat f= sf$, $f$ is even, and $sf(x_n)\geq 0=f(0)$, we have that
\begin{align}\label{LoganEst}
\begin{split}
&  \frac{f(0) - sf(x_n)}{|x_n|^2} 
\\ & = \int_{B_{r(f)}^d} \frac{1-\cos(2\pi \langle x_n, y\rangle)}{|x_n|^2} f(y) \,\mmd y + \int_{\R^d\setminus B_{r(f)}^d} \frac{1-\cos(2 \pi \langle x_n, y\rangle)}{|x_n|^2} f(y)\, \mmd y \leq 0.
\end{split}
\end{align}
Uniformly on compact subsets of the real line, 
\begin{equation}\label{lim}
 \lim_{t \to 0} \frac{1-\cos(2 \pi t u)}{t^2} = 2\pi^2 u^2.
 \end{equation}
It follows that the first summand on the right-hand side of \eqref{LoganEst} is bounded independently of the sequence $\{x_n\}_{n\in\N}$, and tends to a finite limit, as $n\to\infty$. 

 Let $e_k\in\R^d$ denote the $k$-th coordinate unit vector, and write $y=(y_1,\ldots,y_d)\in\R^d$. 
 Since $f$ is radial, we lose no generality in assuming that $x_n=\lambda_ne_k$, for some $\lambda_n>0$ and $1\leq k\leq d$.
 Then $|x_n|=\lambda_n$ and $\langle x_n,y\rangle=\lambda_n y_k$.
Given $R\geq r(f)$, the second summand on the right-hand side of \eqref{LoganEst} (whose integrand is nonnegative on the region of integration) can be bounded from below as follows:
\[ \int_{\R^d\setminus B_{r(f)}^d} \frac{1-\cos(2 \pi \langle x_n, y\rangle)}{|x_n|^2} f(y)\, \mmd y 
\ge \int_{B_R^d\setminus B_{r(f)}^d} \frac{1-\cos(2 \pi \langle x_n, y\rangle)}{|x_n|^2} f(y) \,\mmd y.\] 
It then follows from \eqref{LoganEst}, \eqref{lim} that
\[ \sup_{R \geq r(f)} \int_{B_R^d\setminus B_{r(f)}^d} y_k^2 f(y) \,\mmd y < + \infty,\] 
whence $ \int_{\R^d} |y|^2 |f(y)| \,\mmd y < + \infty.$ 
Finally, \eqref{eq1} follows from noting that
\[\int_{\R^d} {y_k^2} f(y) \,\mmd y
=\frac 1{2\pi^2}\lim_{n\to\infty} \int_{\R^d} \frac{1-\cos(2\pi \langle x_n, y\rangle)}{|x_n|^2} f(y) \,\mmd y\leq 0,\]
and summing in $k$. 
\end{proof}

\begin{proof}[Proof of Lemma \ref{two}] 
Reasoning as in the proof of \cite[Th\'eor\`eme 1.1]{BCK10}, we have that 
\begin{equation}\label{estone}
f_+(x) \le 2\int_{0}^{\infty} f_-(y) (1-\cos(2\pi x y))\, \mmd y.
\end{equation}
If  $x \in[0,r(f)]$, then $y \mapsto 1- \cos(2 \pi x y)$ defines a nonnegative, increasing function of $y$ on the interval $[0,r(f)]$, provided $r(f)\leq\frac{1}{\sqrt{2}}$.
Since supp$(f_-)\subseteq [-r(f),r(f)]$, and $\|f_-\|_{L^1(0,r(f))}=\frac14\leq r(f)$, by \cite[Lemma 11]{GOeSS17} it then follows that
\begin{align}
\int_{0}^{\infty} f_-(y) (1-\cos(2\pi x y))\, \mmd y 
&\le \int_{r(f)-\frac{1}{4}}^{r(f)} (1- \cos(2 \pi x y)) \,\mmd y\notag\\
&= \frac{1}{4} +  \frac{\sin(2 \pi (r(f)-\frac{1}{4})x) - \sin(2 \pi r(f) x)}{2\pi x}.\label{esttwo}
\end{align}
Estimate \eqref{pointwisefplus}  follows at once from \eqref{estone}, \eqref{esttwo}, and the lemma is proved.
\end{proof}

\begin{proof}[Proof of Lemma \ref{eta}]
Let $  \mathbbm 1_{B_1^d}$ denote the indicator function of the unit ball $B_1^d\subset\R^d$, and define the convolutions $\chi :=   \mathbbm 1_{B_1^d}\ast   \mathbbm 1_{B_1^d}$ and $\varphi := \chi \ast \widehat{\chi}$.  Explicitly, 
\[\chi(x)=(2-|x|)_+,\;\;\;
 \widehat{\chi} (\xi) = \left(\frac{J_{d/2}(2\pi |\xi|)}{|\xi|^{d/2}}\right)^2,\]
 where $J_{d/2}$ denotes the Bessel function of the first kind.
 One easily checks that $\varphi,\widehat{\varphi}\in L^1$.
Moreover, the function ${\varphi}$ is bandlimited, and standard decay properties of the Bessel functions imply the existence of a constant $A>0$, such that
 \[\varphi(x)=\int_{B_2^d} (2-|t|)\frac{J_{d/2}^2(2\pi|x-t|)}{|x-t|^d} \,\d t\geq  A{|x|^{-(d+1)}},\] 
provided $|x|$ is sufficiently large. Let $\psi := \varphi + \widehat{\varphi}$. 
Then $\widehat\psi= \psi$, and $|x|^{d+1}\psi(x) \geq A$, for all sufficiently large values of $|x|$. This is still not the desired function since $\psi(0)>0$, but one can simply add an appropriate Gaussian function. For instance, the function 
$x\mapsto\eta(x) :=A^{-1}(\psi(x)-2\psi(0)\exp(-\pi |x|^2))\in\A_+(d)$ satisfies all the required properties.
\end{proof}

\begin{proof}[Proof of Lemma \ref{gh}]
If $\delta>0$ is small enough, then 
\begin{equation}\label{eq:lessthan0}
g(0)=\int_{B_{2\delta}^d}f\varphi_\delta<0,
\end{equation}
since the function $f$ is continuous and $f(0)<0$.
Moreover, $\widehat{g}(0)=\widehat{f}(0)(\widehat\psi_\delta(0))^2<0$.
Further note that
 $g(x)\geq 0$, provided $f\geq 0$ on the ball $x+B_{2\delta}^d$, and so it suffices to take $\delta\leq\frac{\varepsilon}2$ in order to ensure that $r(g)\leq r(f)+\varepsilon$. 
With these preliminary observations in place, we are now ready for the proof of the lemma.
 
For part (a), one easily checks that $h$ is a Schwartz function which coincides with its own Fourier transform. 
Moreover, 
\[h(0)=(\widehat{g}\ast\varphi_\delta)(0)+g(0)(\widehat\psi_\delta(0))^2,\]
where both summands are negative. 
For the first summand, note that
$(\widehat{g}\ast\varphi_\delta)(0)=\int_{B_{2\delta}^d} \widehat{g}\varphi_\delta$ is negative
if $\delta>0$ is small enough, since the function $\widehat{g}$ is continuous and $\widehat{g}(0)<0$. 
For the second summand, this is clear in light of \eqref{eq:lessthan0} and the real-valuedness of $\widehat\psi_\delta$. 

For part (b), we seek to verify that
$h(x)\geq 0, \text{ if } |x|\geq r(f)+\varepsilon$,
which will follow from
\[(\widehat{g}\ast\varphi_\delta)(x)\geq 0 \text{ and } {g}(x)\widehat\varphi_\delta(x)\geq 0,\text{ if } |x|\geq r(f)+\varepsilon.\]
The lower bound for ${g}\widehat\varphi_\delta$ follows immediately from $r(g)\leq r(f)+\varepsilon$,
whereas $\widehat f={f}$ implies
\[\widehat{g}\ast\varphi_\delta=(\widehat{f}\cdot\widehat\varphi_\delta)\ast\varphi_\delta=(f\cdot\widehat\varphi_\delta)\ast\varphi_\delta.\]
But $r(f\cdot\widehat\varphi_\delta)\leq r(f)$ since $\widehat\varphi_\delta=(\widehat\psi_\delta)^2\geq 0$, and so the lower bound for $\widehat{g}\ast\varphi_\delta$ follows as before. This concludes the verification of part (b), and the proof of the lemma.
\end{proof}


\section{From continuous to discrete uncertainty}\label{sec:DiscreteVsContinuous} 

Very recently, the authors \cite{GOeSR19} established  new and very general sign uncertainty principles which apply to certain classes of bounded linear operators and metric measure spaces. 
As a corollary, we obtained a sign uncertainty principle for Fourier series on the $d$-torus $\mathbb T^d=\R^d/\Z^d$, which we proceed to describe.

Given $s\in\{+,-\}$ and $d\geq 1$, let $\mathcal{P}_s(\mathbb{T}^d)$ denote the class of continuous, even  functions $g:\mathbb T^d\to\R$, such that $\widehat g\in \ell^1$, $\widehat g(0)\leq 0$, and
 $s \widehat g$ is eventually nonnegative while $sg(0)\leq 0$.
Given $g\in\mathcal P_s(\mathbb T^d)$, define the quantities\footnote{Trivially, $r(g;\mathbb T^d)\leq \tfrac{\sqrt{d}}2$ for all continuous functions $g:\mathbb T^d\to\mathbb{R}$.}
\begin{align*}
r(g;\mathbb T^d) &:= \inf\{r >0: g(x)\geq 0 \text{ if }\tfrac{\sqrt{d}}{2}\geq |x|_2\geq  r\};\\
k_s(\widehat{g}) &:= \min\{k \ge 1 \colon s\widehat{g}(n)\geq 0 \text{ if }|n|_2\geq k\},
\end{align*}
(here, $|\cdot|_2$ denotes the Euclidean norm) together with the optimal constant
\begin{equation}\label{eq:AsTd}
\mathbb{P}_s(\mathbb{T}^d) 
:= \inf_{g \in \mathcal{P}_s(\mathbb{T}^d)\setminus\{{\bf 0}\}} \sqrt{r(g;\mathbb T^d)k_s(\widehat{g})}.
\end{equation}
A straightforward application of  \cite[Theorem 1.2]{GOeSR19} reveals that
$$
\mathbb{P}_s(\mathbb{T}^d)  \geq (1+o(1))\sqrt{\frac{d}{2\pi e}}.
$$
Identity $\mathbb{A}_+(1) = \mathbb{A}_+^{\mathcal B}(1)$ from Theorem \ref{thm:plus-equiv} leads to the following connection between the continuous and discrete versions of the one-dimensional $+1$ sign uncertainty principle.

\begin{prop}\label{thm:cont-disc} 
Given $s\in\{+,-\}$ and $d\geq 1$, it holds that
\[\mathbb{P}_s(\mathbb{T}^d)  \le \mathbb{A}^{\mathcal B}_s(d).\]
In particular,
\[\mathbb{P}_+(\mathbb{T}^1)  \le \mathbb{A}_+(1).\]
\end{prop}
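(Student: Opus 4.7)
The plan is to construct, for each bandlimited $f\in\mathcal{A}_s^{\mathcal{B}}(d)$ with $\rho:=r(f)$ and $\kappa:=r(\widehat f)$, a function $g\in\mathcal{P}_s(\mathbb{T}^d)$ satisfying $r(g;\mathbb{T}^d)\cdot k_s(\widehat g)\leq\rho\kappa$. The proposition will then follow upon taking the infimum over $f\in\mathcal{A}_s^{\mathcal{B}}(d)$, and the one-dimensional consequence by further combining with the identity $\mathbb{A}_+(1)=\mathbb{A}_+^{\mathcal{B}}(1)$ from Theorem~\ref{thm:plus-equiv}. The strategy is periodization at a carefully chosen scale; it proceeds differently for the two signs owing to the asymmetric conditions defining $\mathcal{P}_s(\mathbb{T}^d)$. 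A common input is the elementary inequality $|x+n|_2\geq|x|_2$ for every $x\in[-\tfrac12,\tfrac12]^d$ and $n\in\Z^d$, which follows from the expansion $|x+n|_2^2-|x|_2^2=\sum_i n_i(n_i+2x_i)$ and coordinate-wise nonnegativity.

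For $s=-$, I would choose a positive integer $m\geq\max(\rho\kappa,2\rho\kappa/\sqrt{d})$ and set $\lambda:=m/\kappa\geq\rho$. Define $g$ as the trigonometric polynomial on $\mathbb{T}^d$ with Fourier coefficients $\widehat g(k):=\lambda^{-d}\widehat f(k/\lambda)$; this is a finite sum since $\textup{supp}\,\widehat f$ is compact, and Poisson summation identifies $g$ with the periodization $g(x)=\sum_{n\in\Z^d} f(\lambda(x+n))$. The required properties are then transferred directly: $\widehat g(0)\leq 0$ is immediate; for $|k|_2\geq m$, $|k/\lambda|_2\geq\kappa$ yields $-\widehat g(k)\geq 0$, so $k_-(\widehat g)\leq m$; the elementary inequality forces $|\lambda(x+n)|\geq\rho$ for all $n$ whenever $|x|_2\geq\rho/\lambda$ in the fundamental domain, making every summand of $g(x)$ nonnegative and yielding $r(g;\mathbb{T}^d)\leq\rho/\lambda$; finally, $g(0)=f(0)+\sum_{n\neq 0}f(\lambda n)\geq 0$, since $f(0)\geq 0$ (from the definition of $\mathcal{A}_-(d)$) and $f(\lambda n)\geq 0$ for $n\neq 0$ (as $|\lambda n|\geq\rho$). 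Hence $r(g;\mathbb{T}^d)\cdot k_-(\widehat g)\leq(\rho/\lambda)\cdot m=\rho\kappa$.

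For $s=+$, the direct periodization offers no control over $g(0)$, so I will instead use a dual construction. With $\textup{supp}(\widehat f)\subseteq B_K^d$ and a positive integer $m>\max(2\rho K,2\rho\kappa/\sqrt{d})$, set $\lambda:=\rho/m$; then $\lambda K<\tfrac12$, and $f_\lambda(x):=f(\lambda x)$ satisfies $\textup{supp}(\widehat{f_\lambda})\subseteq B_{\lambda K}^d\subsetneq B_{1/2}^d$. Let $g$ be the continuous periodic extension of $\widehat{f_\lambda}$ to $\mathbb{T}^d$. A double Fourier transform combined with evenness of $f$ yields $\widehat g(k)=f(\lambda k)$. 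The analogous verifications give $\widehat g(0)=f(0)\leq 0$, $k_+(\widehat g)\leq m$ (since $\widehat g(k)=f(\lambda k)\geq 0$ for $|k|_2\geq m$), $g(0)=\lambda^{-d}\widehat f(0)\leq 0$, and $r(g;\mathbb{T}^d)\leq\lambda\kappa$ (since $g(x)=\lambda^{-d}\widehat f(x/\lambda)\geq 0$ for $|x|_2\geq\lambda\kappa$), whence $r(g;\mathbb{T}^d)\cdot k_+(\widehat g)\leq\lambda\kappa\cdot m=\rho\kappa$.

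The main obstacle will be verifying the summability condition $\widehat g\in\ell^1$ for the $s=+$ construction, which amounts to $\sum_k|f(\lambda k)|<\infty$; this need not hold for a generic bandlimited $f\in\mathcal{A}_+(d)$, whose decay is controlled only by the regularity of $\widehat f$. A secondary but related concern is pointwise convergence of the Poisson sum used for $s=-$. Both issues can be resolved by a preliminary reduction to Schwartz bandlimited functions, effected by convolving $\widehat f$ with a smooth, nonnegative, compactly supported bump---echoing Lemma~\ref{gh} and Proposition~\ref{B-}; this perturbs $r(f)$ and $r(\widehat f)$ arbitrarily little while making $f$ Schwartz, so that all Fourier series and Poisson sums converge absolutely. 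Taking the infimum over $f\in\mathcal{A}_s^{\mathcal{B}}(d)$ then completes the proof.
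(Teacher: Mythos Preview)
Your argument is correct, and your $s=+$ construction coincides with the paper's. The paper, however, treats both signs at once with the single definition $g:=s\,\widehat{f_\lambda}$ (viewed on $\mathbb{T}^d$ once $\lambda=r(f)/n$ is small enough), so that $\widehat g(k)=s f(\lambda k)$; the verifications $\widehat g(0)=sf(0)\le 0$, $sg(0)=\widehat{f_\lambda}(0)\le 0$, $s\widehat g(k)=f(\lambda k)\ge 0$ for $|k|\ge n$, and $r(g;\mathbb{T}^d)=\lambda\,r(\widehat f)$ go through uniformly in $s$. Your separate periodization for $s=-$ works too, but it is unnecessary: the ``dual'' construction you use for $s=+$ already handles $s=-$ after multiplying by $s$, and the paper's route avoids the Poisson-summation step altogether.

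On the $\ell^1$ issue you flag, the paper invokes the Plancherel--P\'olya inequality (valid for bandlimited $L^1$ functions in any dimension) to conclude $\sum_{k}|f(\lambda k)|<\infty$ directly, without modifying $f$. This is cleaner than your smoothing reduction and sidesteps a small wrinkle in your sketch: convolving $\widehat f$ with a nonnegative bump gives $f_{\mathrm{new}}=f\cdot\widehat\varphi$, which indeed becomes Schwartz and keeps $r(f)$, $r(\widehat f)$, and the sign of $f(0)$, but $\widehat{f}_{\mathrm{new}}(0)=(\widehat f*\varphi)(0)$ need not remain $\le 0$ when $\widehat f(0)=0$. That is fixable (e.g.\ by first perturbing to $\widehat f(0)<0$, or by citing Plancherel--P\'olya), but it is a detail your write-up should address.
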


\begin{proof}
Let $f\in\A_s(d)$ be nonzero and bandlimited. Assume that supp$(\widehat{f}) \subseteq [-a,a]^d$, for some $a>0$. Let $f_\lambda(x):=f(\lambda x)$, in which case $\widehat{f}_\lambda(\xi)=\lambda^{-d}\widehat f(\xi/\lambda)$ and supp$(\widehat{f}_\lambda)\subseteq [-a\lambda,a\lambda]^d$. 
Note that, as long as $2a\lambda<1$, $\widehat{f}_\lambda$ can be seen as a function on $\mathbb T^d\simeq[-\tfrac12,\tfrac12]^d$. Set $\lambda=\frac{r(f)}n$, where $n\in \N$ is sufficiently large  so that $2a\lambda<1$, and define $g(\xi):=s\widehat{f_\lambda}(\xi)$. 
By the higher dimensional version of a result of Plancherel \& P\'olya \cite{PP},  
\begin{equation}\label{PP}
 \sum_{k \in \mathbb{Z}^d} |\widehat{g}(k)| = \sum_{k\in\mathbb{Z}^d} |f(\lambda k)| < \infty,
 \end{equation}
whence $g\in\mathcal{P}_s(\mathbb{T}^d)\setminus\{{\bf 0}\}$. 
Moreover, $r(g;\mathbb T^d) = r(\widehat{f}_\lambda)=\frac{r(f)}n r(\widehat{f})$, and $k_s(\widehat{g})\leq n$. Therefore
\[\mathbb{P}_s(\mathbb{T}^d)^2 \le {r(g;\mathbb T^d)k_s(\widehat{g})} \le {r(f)r(\widehat{f})}.\]
Since  $f\in\A_s(d)$ was an arbitrary nonzero bandlimited function, this finishes the proof of the proposition.
\end{proof}

\section*{Acknowledgements} 
F.G.\@ acknowledges support from the Deutsche Forschungsgemeinschaft through the Collaborative Research Center 1060. 
D.O.S.\@ is supported by the EPSRC New Investigator Award ``Sharp Fourier Restriction Theory'', grant no.\@ EP/T001364/1.
J.P.G.R.\@ acknowledges financial support from the Deutscher Akademischer Austauschdienst.
The authors are indebted to the anonymous referees for careful reading and valuable suggestions.

\end{document}